\newtheorem{remark}{Remark}
\newtheorem{example}{Example}
\newcommand{\pT}{{\partial \mathcal{T}}}
\def\3bar{{|\!|\!|}}
\newcommand{\T}{{\mathcal{T}}}
\newcommand{\vertiii}[1]{{\left\vert\kern-0.25ex\left\vert\kern-0.25ex\left\vert #1
    \right\vert\kern-0.25ex\right\vert\kern-0.25ex\right\vert}}
\title{A Generalized Weak Galerkin method for Oseen equation}
\author{
Wenya Qi
\thanks{School of Mathematics and Information Sciences, Henan Normal University, Xinxiang, Henan 453007, China(qiwymath@163.com). The research of W.Q. was partially supported by Natural Science Foundation of Henan Province(Grant No.222300420213).}
 \and Padmanabhan Seshaiyer
 \thanks{Department of Mathematical Sciences, George Mason University, Fairfax, VA 22030, USA(pseshaiy@gmu.edu).}
 \and Junping Wang
\thanks{Division of Mathematical Sciences, National Science Foundation, Alexandria, VA 22314, USA(jwang@nsf.gov). The research of J.W. was supported by the NSF IR/D program, while working at National Science Foundation. However, any opinion, finding, and conclusions or recommendations expressed in this material are those of the author and do not necessarily reflect the views of the National Science Foundation.}}
\begin{document}

\maketitle
%\linenumbers
\begin{abstract}
In this work, the authors introduce a generalized weak Galerkin (gWG) finite element method for the time-dependent Oseen equation. The generalized weak Galerkin method is based on a new framework for approximating the gradient operator. Both a semi-discrete and a fully-discrete numerical scheme are developed and analyzed for their convergence, stability, and error estimates.
A generalized {\em{inf-sup}} condition is developed to assist the convergence analysis. The backward Euler discretization is employed in the design of the fully-discrete scheme. Error estimates of optimal order are established mathematically, and they are validated numerically with some benchmark examples.
\end{abstract}

\begin{keywords} Generalized weak gradient, Oseen equation, backward Euler, error estimates.
\end{keywords}

\begin{AMS}
Primary 65N30; Secondary 65N50
\end{AMS}

\pagestyle{myheadings}

\section{Introduction}\label{sec:1}
Navier-Stokes equation has been used to describe incompressible viscous flow and forms the backbone of fluid mechanics \cite{temam2001}. One of the difficulties in the equation is the nonlinear convective acceleration term which involves the product of the velocity with its gradient. One approximation to the Navier-Stokes equation is the Stokes equation where this convective term is omitted. Oseen equation, in comparison to Stokes flow, includes a partial inclusion of the convective acceleration and a reactive term in the momentum equations \cite{Chester1966, oseen1927}.
Oseen equation is also considered as an auxiliary problem to linearize Navier-Stokes equation \cite{posta2005, Huang2014, Zhang2015, Shang2018}. 

The research on the development, analysis and application of the finite element method for the Oseen equation continues to receive attention in the computational sciences community.
In \cite{John2016},  the framework of finite element method for Oseen equation was established and some stabilized schemes were analyzed. 
A general a posteriori error estimator was designed for problems with incompressibility constraint and shown to be valid for the Oseen equation in \cite{Ainsworth1997}. 
The local discontinuous Galerkin method using mixed setting has been introduced for the steady Oseen equation in \cite{Cockburn2003},
and the optimal convergence was derived.
A stabilized finite element method was derived for Oseen equation in \cite{Barrenechea2002, Barrenechea2004} through the introduction of a stabilization parameter and mesh dependent term.
In \cite{Burman2006}, with the estimates independent of local Reynolds number, the interior penalty finite element method was discussed for steady-state Oseen equation.
The space-time discontinuous Galerkin discretization was introduced and analyzed for the evolutionary Oseen equation with time-dependent flow domain in \cite{Vegt2008}.
By using subgrid scale methods, a stabilized finite element formulation was presented for Oseen equation in \cite{Codina2008}, which also proved the stability and optimal convergence.
For optimal control of the Oseen equation, SUPG/PSPG were analyzed in \cite{braack2012}, and a priori analysis was provided. In view of stabilization tensor,  the hybridizable discontinuous Galerkin was provided for steady Oseen equation in \cite{Cesm2013}.
A pressure stabilized finite element scheme was estimated for  the evolutionary Oseen equation in \cite{Notsu2015}.
Least-squares finite element method was presented in \cite{cai2016} where the convergence in various norms was proved. 
Based on a residual type a posteriori error estimator and pressure projection method, a stabilized finite volume method was analyzed in \cite{lu2018}.
In \cite{khan2020}, a divergence-conforming discontinuous method for steady Oseen equation was established 
and a fully robust posteriori error estimator was derived.
Based on the skew-symmetry scheme and two grid discretizations, local and parallel finite element algorithms were established for the evolutionary Oseen equation in  \cite{ding2021},
and error estimates for semi-discrete and fully-discrete were derived.
A multiscale hybrid-mixed method was presented for Oseen equation in \cite{Araya2021}, and a posteriori error estimator was analyzed for the adaptive method.

Weak Galerkin method was first introduced for elliptic problem in \cite{Wang2013weak, Wang2014a}, 
and weak function was presented for the definition of weak gradient operator.
Weak Galerkin method has been used for Stokes equation in \cite{wang2016stokes} with the use of weak gradient and weak divergence.
A weak Galerkin method with skew-symmetry scheme was introduced in \cite{liu2016} for steady-state Oseen equation by using a weak convective operator. 
By introducing weak trilinear term, a skew-symmetry weak Galerkin method was analyzed for Navier-Stokes equation in \cite{liu2018},  
and linearized Oseen approximation was employed in numerical experiments.
For extending the finite element piecewise polynomial spaces to arbitrary finite dimensional spaces,
the generalized weak Galerkin method {(gWG)} has been introduced with new generalized weak gradient definition.
 In particular, gWG  has been established for steady Stokes equation in \cite{qi2021} 
 with the finite element spaces of arbitrary combination of polynomials.
Based on the same spaces as  \cite{qi2021},  we consider the steady and evolutionary Oseen equation by using gWG scheme in this work.
The main contribution of this work includes the development of a scheme that is proposed from the equation directly without the skew-symmetry terms, 
and the coercivity of bilinear form which is derived for the uniqueness and existence of solutions. 

This work is organized as follows. In Section \ref{sec:2}, generalized weak gradient and  the related weak divergence are presented. 
The semi-discrete and fully-discrete {gWG} schemes  for the evolutionary Oseen equation are established with backward Euler for time discretization.
In Section \ref{sec:3}, the {gWG} scheme is introduced for steady-state Oseen equation firstly, and optimal convergence orders are obtained.
Then, for evolutionary case, the error estimate is presented by introducing an elliptic projection operator in Section \ref{sec:4}.
Finally, numerical examples for benchmark problems are shown to verify the theoretical results in Section \ref{sec:5}.

In this article, we adopt the notations in \cite{adams2003sobolev} for Sobolev spaces and denote generic nonnegative constants by $C$ which is independent of the mesh size and time step.
 
\section{gWG for Evolutionary Oseen Equation}\label{sec:2}
 We consider the evolutionary Oseen equation that seeks the unknown fluid velocity $\mathbf{u}(t): =\mathbf{u}(\cdot, t)$ and the pressure $p(t): =p(\cdot, t)$ such that
\begin{equation}\label{evo_oseen_problem}
\begin{aligned}
\rho\mathbf{u}_t-\mu\Delta \mathbf{u}+\rho(\beta\cdot\nabla)\mathbf{u}+\nabla p&=\mathbf{f},~~~~~~~~~~\mbox{in}~\Omega\times(0,\bar{T}], \\
\nabla\cdot\mathbf{u}&=0,~~~~~~~~~~\mbox{in}~\Omega\times(0,\bar{T}],\\
\mathbf{u}&=\mathbf{g}, ~~~~~~~~~~\mbox{on}~\partial\Omega\times(0,\bar{T}], \\
\mathbf{u}(0)&=\mathbf{g}_2, ~~~~~~~~~\mbox{in}~\Omega, 
\end{aligned}
\end{equation}
where $\Omega$ is an open bounded polygonal or polyhedral domain in $\mathbb{R}^d,~d=2,3$. 
The external force field $\mathbf{f}$, the admitted flux $\mathbf{g}$  across $\partial\Omega$ and the initial velocity $\mathbf{g}_2$ are given functions. 
$\beta$ is a given convection field, $\mu>0$ is the constant kinematic viscosity and $\rho\geq 0$ is a given scalar function. When $\rho= 0$, \eqref{evo_oseen_problem} is steady-state Stokes equation which has been analyzed with gWG in \cite{qi2021}, and we consider $\rho> 0$ in this paper.

The weak form of problem \eqref{evo_oseen_problem} reads as follows: find $\mathbf{u}(t)\in[H^1(\Omega)]^d$ and $p(t)\in L_0^2(\Omega)$ 
satisfying $\mathbf{u}(t)=\mathbf{g}$ on $\partial\Omega$  and $\mathbf{u}(0)=\mathbf{g}_2$ in $\Omega$ such that
\begin{equation}\label{evo_weakform}
\begin{aligned}
\rho(\mathbf{u}_t, \mathbf{v})+\mu(\nabla \mathbf{u}, \nabla \mathbf{v})
+{\rho}((\beta\cdot\nabla)\mathbf{u}, \mathbf{v})-(p,\nabla\cdot\mathbf{v})&=(\mathbf{f},\mathbf{v}), ~~\forall \mathbf{v}\in[H^1_{0}(\Omega)]^d,\\
(\nabla\cdot\mathbf{u}, q)&=0,~~~~~~~\forall q\in L_0^2(\Omega).
\end{aligned}
\end{equation}

Here, we denote the spaces
\begin{equation*}
\begin{aligned}
[H^1_{0}(\Omega)]^d&=\{\mathbf{v}\in [H^1(\Omega)]^d, ~\mathbf{v}=\mathbf{0} ~\mbox{on} ~\partial \Omega\},\\
L_0^2(\Omega)&=\Big\{q\in L^2(\Omega), \int_{\Omega}q dx = 0\Big\}.
\end{aligned}
\end{equation*}

Denote the partition using polygon or polyhedra of domain $\Omega$ by $\mathcal{T}_h$ which satisfy the shape regular conditions in \cite{Wang2014a}. 
For each $T\in\mathcal{T}_h$, $h_T$ is its diameter and $h=\max_{T\in\mathcal{T}_h}h_T$ is mesh size of the partition $\mathcal{T}_h$. 
Let $\partial T$ be the edges of element $T$ and $\mathcal{E}_{I}$ the set of interior edges.
Denote by ${P}_{k}(T)$ the polynomials space of degree less than or equal to $k$ in all variables on element $T$. 
We consider the finite element spaces
\begin{equation*}
\begin{aligned}
 \mathbf{V}_{h}:=&\{(\mathbf{v}_0, \mathbf{v}_b): \mathbf{v}_0|_{T}\in[{P}_{k}(T)]^d, \mathbf{v}_b|_{\partial T}\in[{P}_{j}(\partial T)]^d,~ \forall T \in \mathcal{T}_{h}\},\\
W_{h}:=&\{q\in L_0^2(\Omega): q|_{T}\in{P}_{n}(T), ~\forall T \in \mathcal{T}_{h}\},
 \end{aligned}
\end{equation*}
where $k, j, n\geq 0$ are arbitrary non-negative integers. 
Denote the subspace of $\mathbf{V}_{h}$ with vanishing boundary condition by $\mathbf{V}_{h}^0$, i.e.
$\mathbf{V}_{h}^0=\{\mathbf{v}\in \mathbf{V}_{h}, ~\mathbf{v}_b=\mathbf{0} ~\mbox{on} ~\partial \Omega\}.$

We define the local $L^2$ projection operators $Q_0: [L^2(T)]^d \rightarrow [P_k(T)]^d$ and $Q_b: [L^2(\partial T)]^d \rightarrow [P_j (\partial T)]^d$. 
Denote by $Q_h =\{Q_0, Q_b\}$ the projection operator to the space of weak functions and by $Q_h^p$ the $L^2$ projection operator onto  $W_h$.

Next, based on the weak function and projection operators, we define a weak gradient.

\begin{definition}\label{weakgradient}
For each $\mathbf{v}\in \mathbf{V}_h$, a generalized discrete weak gradient operator $\nabla_w\mathbf{v}$ is defined as follows
$$\nabla_w\mathbf{v}=\nabla\mathbf{v}_0+\delta_w\mathbf{v},$$
where $\delta_w\mathbf{v}|_T\in [P_l(T)]^{d\times d}$ is given by
$$(\delta_w\mathbf{v}, \phi)_T=\langle \mathbf{v}_b- Q_b\mathbf{v}_0,  \phi\cdot\mathbf{n}  \rangle_{\partial T}, \forall \phi\in [P_l(T)]^{d\times d},$$
where $\mathbf{n}$ denotes the unit outward normal to $\partial T$.
\end{definition}

In order to present the numerical scheme for Oseen equation, we also need to define a weak divergence operator.

\begin{definition}\label{weakdivergence}
For each $\mathbf{v}\in \mathbf{V}_h$, we define the weak divergence $\nabla_w\cdot\mathbf{v}\in P_m(T)$ such that
$$(\nabla_w\cdot\mathbf{v}, \psi)_T = - (\mathbf{v}_0, \nabla \psi)_T+\langle \mathbf{v}_b, \psi\cdot\mathbf{n} \rangle_{\partial T}, ~\forall~\psi\in P_m(T).$$
 \end{definition}

Based on the weak formulation \eqref{evo_weakform} and the definition of weak differential operators, the semi-discrete {g}WG scheme for the evolutionary Oseen equation \eqref{evo_oseen_problem} reads as follows: For $t\in(0, \bar{T}]$, find $\mathbf{u}_h(t)\in \mathbf{V}_{h}$ and $p_h(t)\in W_{h}$ 
satisfying $\mathbf{u}_b(t)=Q_b\mathbf{g}$ on $\partial \Omega$ and $\mathbf{u}_h(0)=Q_h\mathbf{g}_2$ in $\Omega$  such that
\begin{equation}\label{sch:evo_iWG_semi}
\begin{aligned}
&\rho(\mathbf{u}_{ht}, \mathbf{v}_h)+\mu(\nabla_w \mathbf{u}_h, \nabla_w \mathbf{v}_h)+{\rho}((\beta\cdot\nabla_w)\mathbf{u}_h, \mathbf{v}_0)\\
&-(p_h,\nabla_w\cdot\mathbf{v}_h)+s_1(\mathbf{u}_h, \mathbf{v}_h)=(\mathbf{f},\mathbf{v}_0), ~~~~~~~~~~~~~~~\forall~ \mathbf{v}_h\in\mathbf{V}_{h}^0,\\
&(\nabla_w\cdot\mathbf{u}_h, q_h)+s_2(p_h, q_h)=0,~~~~~~~~~~~~~~~~~~~~~~~~~~\forall~q_h\in W_{h}.
\end{aligned}
\end{equation}
Here,  the stabilization terms are defined as follows
\begin{equation*}
\begin{aligned}
s_1(\mathbf{u}_h, \mathbf{v}_h)&=\zeta\sum_{T\in\mathcal{T}_h}h_T^{\gamma}\langle Q_b(\mathbf{u}_b-\mathbf{u}_0), Q_b(\mathbf{v}_b-\mathbf{v}_0)\rangle_{\partial T},\\
s_2(p_h, q_h)&=\sigma\sum_{e\in\mathcal{E}_{I}}h_e^{\alpha}\langle \llbracket p_h \rrbracket, \llbracket q_h \rrbracket \rangle_e,
\end{aligned}
\end{equation*}
where $\llbracket q_h \rrbracket={q_h}|_{\partial T_1\cap e}-{q_h}|_{\partial T_2\cap e}$ denotes the jump on interior edge $e$ shared by the elements $T_1$ and $T_2$.
Assume that $\vert\gamma\vert<\infty$, $\vert\alpha\vert<\infty$. Here $\zeta$ is a positive parameter, and $\sigma$ is zero or one. 

For simplicity, we consider uniform time stepping $\tau=\bar{T}/N$ with $N$ being the total time steps. Denote by $t^{\bar{n}}=\bar{n}*\tau$ the $\bar{n}$-th time level.

By replacing the time derivative by backward difference quotient which is backward Euler discretization, the fully-discrete {g}WG scheme seeks $\mathbf{u}^{\bar{n}+1}\in \mathbf{V}_{h}$ and $p^{\bar{n}+1}\in W_{h}$ 
satisfying the boundary condition $\mathbf{u}_b^{\bar{n}+1}=Q_b\mathbf{g}$ on $\partial \Omega$  and initial condition $\mathbf{u}^0=Q_h\mathbf{g}_2$ in $\Omega$ such that
\begin{equation}\label{sch:evo_iWG_fully}
\begin{aligned}
&\rho(\bar{\partial}_t{\mathbf{u}^{\bar{n}+1}}, \mathbf{v}_h)+\mu(\nabla_w \mathbf{u}^{\bar{n}+1}, \nabla_w \mathbf{v}_h)+{\rho}((\beta\cdot\nabla_w)\mathbf{u}^{\bar{n}+1}, \mathbf{v}_0)\\
&-(p^{\bar{n}+1},\nabla_w\cdot\mathbf{v}_h)+s_1(\mathbf{u}^{\bar{n}+1}, \mathbf{v}_h)=(\mathbf{f}^{\bar{n}+1},\mathbf{v}_0), ~~~~~~~~~~~~~~~\forall~ \mathbf{v}_h\in\mathbf{V}_{h}^0,\\
&(\nabla_w\cdot\mathbf{u}^{\bar{n}+1}, q_h)+s_2(p^{\bar{n}+1}, q_h)=0,~~~~~~~~~~~~~~~~~~~~~~~~~~~~~~~\forall~q_h\in W_{h}.
\end{aligned}
\end{equation}
Here, the backward difference quotient is denoted by $\bar{\partial}_t{\mathbf{u}^{\bar{n}+1}}=\frac{\mathbf{u}^{\bar{n}+1}-\mathbf{u}^{\bar{n}}}{\tau}$.

Before coming to the estimates of gWG schemes \eqref{sch:evo_iWG_fully} and  \eqref{sch:evo_iWG_semi}  for evolutionary case, 
let us analyze the {g}WG for steady-state Oseen equation.
 
\section{{g}WG for Steady-State Oseen Equation}\label{sec:3}
 The steady-state Oseen equation seeks an unknown velocity $\mathbf{u}$ and a pressure $p$ such that
\begin{equation}\label{st_oseen_problem}
\begin{aligned}
-\mu\Delta \mathbf{u}+\rho(\beta\cdot\nabla)\mathbf{u}+\nabla p&=\mathbf{f},~~~~~~~~~~\mbox{in}~\Omega, \\
\nabla\cdot\mathbf{u}&=0,~~~~~~~~~~\mbox{in}~\Omega,\\
\mathbf{u}&=\mathbf{g}, ~~~~~~~~~~\mbox{on}~\partial\Omega.
\end{aligned}
\end{equation} 
By a weak solution we mean a pair of  $\mathbf{u}\in[H^1(\Omega)]^d$ and $p\in L_0^2(\Omega)$ 
satisfying $\mathbf{u}=\mathbf{g}$ on $\partial\Omega$ and the following equations
\begin{equation}\label{weakform}
\begin{aligned}
\mu(\nabla \mathbf{u}, \nabla \mathbf{v})+{\rho}((\beta\cdot\nabla)\mathbf{u}, \mathbf{v})-(p,\nabla\cdot\mathbf{v})&=(\mathbf{f},\mathbf{v}), ~~\forall \mathbf{v}\in[H^1_{0}(\Omega)]^d,\\
(\nabla\cdot\mathbf{u}, q)&=0,~~~~~~~\forall q\in L_0^2(\Omega).
\end{aligned}
\end{equation}
The {g}WG scheme for the steady-state Oseen equation \eqref{st_oseen_problem} seeks $\mathbf{u}_h\in \mathbf{V}_{h}$ and $p_h\in W_{h}$ satisfying $\mathbf{u}_b=Q_b\mathbf{g}$ on $\partial \Omega$ and the following equations 
\begin{equation}\label{sch:WG_FEM}
\begin{aligned}
&\mu(\nabla_w \mathbf{u}_h, \nabla_w \mathbf{v}_h)+{\rho}((\beta\cdot\nabla_w)\mathbf{u}_h, \mathbf{v}_0)
-(p_h,\nabla_w\cdot\mathbf{v}_h)+s_1(\mathbf{u}_h, \mathbf{v}_h)=(\mathbf{f},\mathbf{v}_0),\\
&(\nabla_w\cdot\mathbf{u}_h, q_h)+s_2(p_h, q_h)=0,
\end{aligned}
\end{equation}
for all $\mathbf{v}_h\in\mathbf{V}_{h}^0$ and $q_h\in W_{h}$.

For convenience, we introduce the following energy semi-norm in $\mathbf{V}_h$:
$$
\3bar \mathbf{v}_h \3bar^2:=(\nabla_w \mathbf{v}_h, \nabla_w \mathbf{v}_h)+s_1(\mathbf{v}_h, \mathbf{v}_h), \quad v_h\in \mathbf{V}_h.
$$
\begin{lemma}\label{lem:norm} $\3bar \cdot\3bar$ defines a norm in the subspace $\mathbf{V}_h^0$ consisting of functions with vanishing boundary value.
\end{lemma}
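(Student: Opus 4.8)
The plan is to observe first that $\3bar\cdot\3bar$ is automatically a semi-norm on all of $\mathbf{V}_h$, since $\3bar\mathbf{v}_h\3bar^2$ is the sum of the two positive semi-definite quadratic forms $(\nabla_w\mathbf{v}_h,\nabla_w\mathbf{v}_h)$ and $s_1(\mathbf{v}_h,\mathbf{v}_h)$; homogeneity and the triangle inequality are then immediate from the Cauchy--Schwarz inequality for the associated semi-inner product. Hence the only thing to establish is positive definiteness on $\mathbf{V}_h^0$: if $\mathbf{v}_h=\{\mathbf{v}_0,\mathbf{v}_b\}\in\mathbf{V}_h^0$ satisfies $\3bar\mathbf{v}_h\3bar=0$, then $\mathbf{v}_0\equiv\mathbf{0}$ and $\mathbf{v}_b\equiv\mathbf{0}$.

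Assume $\3bar\mathbf{v}_h\3bar=0$. Then $(\nabla_w\mathbf{v}_h,\nabla_w\mathbf{v}_h)=0$ and $s_1(\mathbf{v}_h,\mathbf{v}_h)=0$. Since $\zeta>0$ and each $h_T^\gamma>0$, the vanishing of $s_1$ forces $Q_b(\mathbf{v}_b-\mathbf{v}_0)=0$ on $\partial T$ for every $T\in\mathcal{T}_h$. Because $\mathbf{v}_b|_{\partial T}\in[P_j(\partial T)]^d$ we have $Q_b\mathbf{v}_b=\mathbf{v}_b$, so this reads $\mathbf{v}_b=Q_b\mathbf{v}_0$ on $\partial T$. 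Substituting into Definition \ref{weakgradient} gives $(\delta_w\mathbf{v},\phi)_T=\langle\mathbf{v}_b-Q_b\mathbf{v}_0,\phi\cdot\mathbf{n}\rangle_{\partial T}=0$ for all $\phi\in[P_l(T)]^{d\times d}$, i.e. $\delta_w\mathbf{v}=0$ on each $T$. Consequently $\nabla\mathbf{v}_0=\nabla_w\mathbf{v}_h-\delta_w\mathbf{v}=0$ on each $T$, so $\mathbf{v}_0$ is a constant vector on every element.

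To finish I would use the weak continuity carried by $\mathbf{v}_b$ together with the homogeneous boundary condition. Since $\mathbf{v}_0|_T$ is constant, $Q_b\mathbf{v}_0=\mathbf{v}_0|_T$ on $\partial T$, hence $\mathbf{v}_b|_{\partial T}$ equals this constant. On an interior edge $e=\partial T_1\cap\partial T_2$ the single-valued trace $\mathbf{v}_b|_e$ must therefore agree with the constant value of $\mathbf{v}_0$ on $T_1$ and with the constant value of $\mathbf{v}_0$ on $T_2$, so these two constants coincide; as $\mathcal{T}_h$ is connected, $\mathbf{v}_0$ equals one global constant $\mathbf{c}$ and $\mathbf{v}_b\equiv\mathbf{c}$ on the skeleton. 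Finally $\mathbf{v}_h\in\mathbf{V}_h^0$ means $\mathbf{v}_b=\mathbf{0}$ on $\partial\Omega$, which yields $\mathbf{c}=\mathbf{0}$ and thus $\mathbf{v}_h=0$.

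The argument is essentially routine; the one point that needs care is the decoupling step, namely recognizing that $s_1(\mathbf{v}_h,\mathbf{v}_h)=0$ not only controls $Q_b(\mathbf{v}_b-\mathbf{v}_0)$ but, because $\mathbf{v}_b$ is already a polynomial of degree $j$ on $\partial T$, in fact forces $\delta_w\mathbf{v}=0$, thereby separating the gradient contribution from the stabilization. After that, the propagation of the constant across the mesh and the use of the vanishing boundary datum are standard. It is worth noting that no relation between $l$ and $k$ is required here, since $\nabla\mathbf{v}_0$ is recovered as the difference $\nabla_w\mathbf{v}_h-\delta_w\mathbf{v}$ rather than by testing against $\nabla\mathbf{v}_0$ itself.
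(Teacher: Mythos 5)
Your proof is correct and follows essentially the same route as the paper's: use the vanishing of $s_1$ to get $\mathbf{v}_b=Q_b\mathbf{v}_0$, conclude $\delta_w\mathbf{v}_h=0$ from Definition \ref{weakgradient}, deduce $\nabla\mathbf{v}_0=0$ so that $\mathbf{v}_0$ is piecewise constant, and propagate the constant through $\mathbf{v}_b$ to the boundary. Your write-up is in fact slightly more careful than the paper's (making explicit $Q_b\mathbf{v}_b=\mathbf{v}_b$, the edge-by-edge matching of constants, and the semi-norm axioms), but there is no substantive difference in approach.
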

\begin{proof}
For $\mathbf{v}_h\in \mathbf{V}_h^0$ with $\3bar \mathbf{v}_h\3bar=0$,
we see that $\nabla_w\mathbf{v}_h=\nabla\mathbf{v}_0+\delta_w\mathbf{v}_h=0$ on each element $T$ and $Q_b\mathbf{v}_0=\mathbf{v}_b$ on each $\partial T$.
By using the definition of weak gradient, we have 
\begin{equation*}
\begin{aligned}
(\delta_w\mathbf{v}_h, \delta_w\mathbf{v}_h)_T=\langle\mathbf{v}_b- Q_b\mathbf{v}_0, \delta_w\mathbf{v}_h \cdot\mathbf{n}\rangle_{\partial T}=0.
\end{aligned}
\end{equation*}
It follows that $\delta_w\mathbf{v}_h=0$ and hence, $\nabla\mathbf{v}_0=0$, which implies that $\mathbf{v}_0$ is a constant on each element. Thus, we have $Q_b\mathbf{v}_0=\mathbf{v}_b$ on each $\partial T$.
Together with $\mathbf{v}_b=0$ on $\partial\Omega$, we obtain $\mathbf{v}_h\equiv 0$.
\end{proof}

In addition, for simplicity, we denote $b(\mathbf{v}_h, q_h):=(\nabla_w\cdot\mathbf{v}_h, q_h)$.
\begin{lemma}\label{lem:inf-sup}\cite{qi2021}
Assume that $n\leq \min \{m, k+1\}$.
For each $q_h\in W_h$, there exists $\mathbf{v}_h\in \mathbf{V}_h^0$ such that
\begin{equation}\label{inf_supconditon}
\begin{aligned}
b(\mathbf{v}_h, q_h) &\geq \frac{1}{2}\Vert q_h\Vert^2-Ch^{1-\alpha}\sum_{e\in\mathcal{E}_{I}}h_e^{\alpha}\Vert  \llbracket q_h \rrbracket \Vert_e^2,\\
\3bar \mathbf{v}_h\3bar&\leq C(1+h^{\frac{1+\gamma}{2}}) \Vert q_h\Vert.
\end{aligned}
\end{equation}
\end{lemma}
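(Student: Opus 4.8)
The plan is to build the required $\mathbf{v}_h$ by projecting a continuous velocity field supplied by the classical inf-sup (Babu\v{s}ka--Brezzi) stability of the divergence operator. First I would use the surjectivity of $\nabla\cdot:[H_0^1(\Omega)]^d\to L_0^2(\Omega)$ on the bounded polygonal/polyhedral domain $\Omega$: for the given $q_h\in W_h\subset L_0^2(\Omega)$ there is $\tilde{\mathbf{v}}\in[H_0^1(\Omega)]^d$ with $\nabla\cdot\tilde{\mathbf{v}}=q_h$ in $\Omega$ and $\|\tilde{\mathbf{v}}\|_{1,\Omega}\le C\|q_h\|$. Then I would set $\mathbf{v}_h:=Q_h\tilde{\mathbf{v}}=\{Q_0\tilde{\mathbf{v}},Q_b\tilde{\mathbf{v}}\}$. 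Since $\tilde{\mathbf{v}}$ vanishes on $\partial\Omega$, so does $Q_b\tilde{\mathbf{v}}$, hence $\mathbf{v}_h\in\mathbf{V}_h^0$ and it is an admissible candidate.

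The heart of the argument is the exact identity
$$b(\mathbf{v}_h,q_h)=\|q_h\|^2+\sum_{e\in\mathcal{E}_{I}}\langle(Q_b\tilde{\mathbf{v}}-\tilde{\mathbf{v}})\cdot\mathbf{n}_e,\llbracket q_h\rrbracket\rangle_e,$$
where $\mathbf{n}_e$ is the fixed edge normal used in the jump. To get it I would fix $T\in\mathcal{T}_h$ and apply Definition \ref{weakdivergence} with $\psi=q_h|_T$, which is legitimate because $q_h|_T\in P_n(T)\subseteq P_m(T)$ by the hypothesis $n\le m$, giving $(\nabla_w\cdot\mathbf{v}_h,q_h)_T=-(Q_0\tilde{\mathbf{v}},\nabla q_h)_T+\langle Q_b\tilde{\mathbf{v}}\cdot\mathbf{n},q_h\rangle_{\partial T}$. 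Since $\nabla q_h|_T\in[P_{n-1}(T)]^d\subseteq[P_k(T)]^d$ (here the hypothesis $n\le k+1$ is consumed), the defining property of $Q_0$ gives $(Q_0\tilde{\mathbf{v}},\nabla q_h)_T=(\tilde{\mathbf{v}},\nabla q_h)_T$. Summing over $T$, inserting the elementwise identity $-(\tilde{\mathbf{v}},\nabla q_h)_T+\langle\tilde{\mathbf{v}}\cdot\mathbf{n},q_h\rangle_{\partial T}=(\nabla\cdot\tilde{\mathbf{v}},q_h)_T=(q_h,q_h)_T$, and using that $\tilde{\mathbf{v}}$ and $Q_b\tilde{\mathbf{v}}$ have single-valued traces on interior edges while $\tilde{\mathbf{v}}|_{\partial\Omega}=\mathbf{0}$, the inter-element boundary contributions collapse onto $\mathcal{E}_{I}$ and yield the stated identity.

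For the first estimate in \eqref{inf_supconditon} I would bound the residual edge by edge: Cauchy--Schwarz gives $|\langle(Q_b\tilde{\mathbf{v}}-\tilde{\mathbf{v}})\cdot\mathbf{n}_e,\llbracket q_h\rrbracket\rangle_e|\le\|Q_b\tilde{\mathbf{v}}-\tilde{\mathbf{v}}\|_e\|\llbracket q_h\rrbracket\|_e$, and because $j\ge0$ the projection $Q_b$ reproduces constants, so a trace inequality with a Bramble--Hilbert/Poincar\'e estimate yields $\|Q_b\tilde{\mathbf{v}}-\tilde{\mathbf{v}}\|_e\le Ch_T^{1/2}|\tilde{\mathbf{v}}|_{1,T}$ for an element $T$ with $e\subset\partial T$. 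A discrete Cauchy--Schwarz over the edges, the bound $|\tilde{\mathbf{v}}|_{1,\Omega}\le C\|q_h\|$, the regularity of $\mathcal{T}_h$ (to rebalance the powers of $h_e$ against $h$), and Young's inequality then produce $b(\mathbf{v}_h,q_h)\ge\tfrac12\|q_h\|^2-Ch^{1-\alpha}\sum_{e\in\mathcal{E}_{I}}h_e^{\alpha}\|\llbracket q_h\rrbracket\|_e^2$.

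For the second estimate I would expand $\3bar\mathbf{v}_h\3bar^2=(\nabla_w\mathbf{v}_h,\nabla_w\mathbf{v}_h)+s_1(\mathbf{v}_h,\mathbf{v}_h)$ and bound each piece. Writing $\nabla_w\mathbf{v}_h=\nabla Q_0\tilde{\mathbf{v}}+\delta_w\mathbf{v}_h$, the $H^1$-stability of $Q_0$ gives $\|\nabla Q_0\tilde{\mathbf{v}}\|\le C|\tilde{\mathbf{v}}|_1$; testing Definition \ref{weakgradient} with $\phi=\delta_w\mathbf{v}_h$ and using an inverse trace inequality on polynomials gives $\|\delta_w\mathbf{v}_h\|_T\le Ch_T^{-1/2}\|Q_b(\tilde{\mathbf{v}}-Q_0\tilde{\mathbf{v}})\|_{\partial T}\le C|\tilde{\mathbf{v}}|_{1,T}$ (trace plus approximation again); and $s_1(\mathbf{v}_h,\mathbf{v}_h)\le C\sum_{T}h_T^{\gamma}\|\tilde{\mathbf{v}}-Q_0\tilde{\mathbf{v}}\|_{\partial T}^2\le Ch^{1+\gamma}|\tilde{\mathbf{v}}|_1^2$. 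Adding and using $|\tilde{\mathbf{v}}|_1\le C\|q_h\|$ gives $\3bar\mathbf{v}_h\3bar^2\le C(1+h^{1+\gamma})\|q_h\|^2$, hence $\3bar\mathbf{v}_h\3bar\le C(1+h^{(1+\gamma)/2})\|q_h\|$. I expect the main obstacle to be the bookkeeping in the key identity of the second paragraph—tracking exactly which polynomial degrees are needed for the $L^2$-projection commutations (this is precisely where $n\le\min\{m,k+1\}$ enters) and verifying that the inter-element boundary terms reorganize cleanly into the jump sum absorbed by the $s_2$-type term; the analytic input (surjectivity of the divergence on $\Omega$) is classical, and the remaining ingredients are routine trace, inverse, and approximation inequalities.
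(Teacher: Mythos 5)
Your proposal is correct; the paper itself gives no proof of this lemma (it is quoted from \cite{qi2021}), and your construction --- take $\tilde{\mathbf{v}}\in[H_0^1]^d$ with $\nabla\cdot\tilde{\mathbf{v}}=q_h$, set $\mathbf{v}_h=Q_h\tilde{\mathbf{v}}$, use $n\le m$ to test the weak divergence with $q_h$ and $n\le k+1$ to commute $Q_0$ past $\nabla q_h$, and collapse the facet terms into the jump sum --- is exactly the standard argument used in that reference. The only point worth flagging is that the rebalancing $h_e^{1-\alpha}\le Ch^{1-\alpha}$ (and likewise $h_T^{1+\gamma}\le Ch^{1+\gamma}$) silently requires quasi-uniformity when $\alpha>1$ or $\gamma<-1$, a caveat the lemma statement itself also glosses over.
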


Next, we set $s=\min\{j, l\}$ and introduce a local $L^2$ projection operator $\bm{Q}_s: [L^2(T)]^{d\times d} \rightarrow [P_s(T)]^{d\times d}$. From the construction of the generalized weak gradient operator,  we have the following results.
 \begin{lemma}\label{lem:identity}\cite{qi2021}
For any $\mathbf{v}\in \mathbf{V}_h$ and $\mathbf{w}\in [H^1(T)]^d$ on the element $T\in\mathcal{T}_h$, 
one has for each $\phi \in [P_s(T)]^{d\times d}$
 \begin{equation*}
\begin{aligned}
(\nabla_w\mathbf{v}, \phi)_T&=-(\mathbf{v}_0, \nabla\cdot\phi)_T+\langle \mathbf{v}_b, \phi\cdot\mathbf{n}\rangle_{\partial T},\\
(\nabla_wQ_h\mathbf{w}, \phi)_T&=(\nabla\mathbf{w}, \phi)_T+((I-Q_0)\mathbf{w}, \nabla\cdot\phi)_T.
 \end{aligned}
\end{equation*}
\end{lemma}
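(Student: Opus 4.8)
\emph{Proposal.} The plan is to obtain both identities directly from Definition~\ref{weakgradient} together with elementwise integration by parts, exploiting the constraint $s=\min\{j,l\}$ to discard the projection $Q_b$ whenever it is tested against a tensor of degree at most $s$.

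For the first identity, fix $\mathbf{v}\in\mathbf{V}_h$ and $\phi\in[P_s(T)]^{d\times d}$, and split $(\nabla_w\mathbf{v},\phi)_T=(\nabla\mathbf{v}_0,\phi)_T+(\delta_w\mathbf{v},\phi)_T$. Since $s\le l$, the tensor $\phi$ is an admissible test function in the defining relation of $\delta_w$, so $(\delta_w\mathbf{v},\phi)_T=\langle\mathbf{v}_b-Q_b\mathbf{v}_0,\phi\cdot\mathbf{n}\rangle_{\partial T}$. Integration by parts on $T$ gives $(\nabla\mathbf{v}_0,\phi)_T=-(\mathbf{v}_0,\nabla\cdot\phi)_T+\langle\mathbf{v}_0,\phi\cdot\mathbf{n}\rangle_{\partial T}$. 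On each face $e\subset\partial T$ the unit outward normal $\mathbf{n}$ is constant, hence $\phi\cdot\mathbf{n}|_e\in[P_s(e)]^d\subset[P_j(e)]^d$, and by the definition of the $L^2$ projection $Q_b$ onto $[P_j(\partial T)]^d$ we get $\langle\mathbf{v}_0,\phi\cdot\mathbf{n}\rangle_{\partial T}=\langle Q_b\mathbf{v}_0,\phi\cdot\mathbf{n}\rangle_{\partial T}$. Adding the two displayed expressions, the $\langle Q_b\mathbf{v}_0,\cdot\rangle$ contributions cancel and we arrive at $(\nabla_w\mathbf{v},\phi)_T=-(\mathbf{v}_0,\nabla\cdot\phi)_T+\langle\mathbf{v}_b,\phi\cdot\mathbf{n}\rangle_{\partial T}$.

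For the second identity, apply the first one to $\mathbf{v}=Q_h\mathbf{w}=\{Q_0\mathbf{w},Q_b\mathbf{w}\}$, which yields $(\nabla_wQ_h\mathbf{w},\phi)_T=-(Q_0\mathbf{w},\nabla\cdot\phi)_T+\langle Q_b\mathbf{w},\phi\cdot\mathbf{n}\rangle_{\partial T}$. Using again that $\phi\cdot\mathbf{n}|_e\in[P_j(e)]^d$, we replace $Q_b\mathbf{w}$ by $\mathbf{w}$ in the boundary term. Writing $-(Q_0\mathbf{w},\nabla\cdot\phi)_T=-(\mathbf{w},\nabla\cdot\phi)_T+((I-Q_0)\mathbf{w},\nabla\cdot\phi)_T$ and invoking the integration-by-parts formula $-(\mathbf{w},\nabla\cdot\phi)_T+\langle\mathbf{w},\phi\cdot\mathbf{n}\rangle_{\partial T}=(\nabla\mathbf{w},\phi)_T$, valid since $\mathbf{w}\in[H^1(T)]^d$, we conclude $(\nabla_wQ_h\mathbf{w},\phi)_T=(\nabla\mathbf{w},\phi)_T+((I-Q_0)\mathbf{w},\nabla\cdot\phi)_T$.

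The only subtle point, and the step I would treat most carefully, is the removal of $Q_b$ against $\phi\cdot\mathbf{n}$: this relies both on the faces of the polytopal elements being flat (so that $\mathbf{n}$ is constant on each $e$) and on $s=\min\{j,l\}\le j$, which bounds the polynomial degree of $\phi\cdot\mathbf{n}|_e$ by $j$; the companion inequality $s\le l$ is precisely what makes $\phi$ a legitimate test tensor in the defining relation for $\delta_w$. Everything else reduces to elementary integration by parts on a single element, so no further difficulty is anticipated.
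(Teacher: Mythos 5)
Your argument is correct and is exactly the standard derivation behind this lemma (the paper itself only cites \cite{qi2021} rather than reproducing the proof): split $\nabla_w\mathbf{v}=\nabla\mathbf{v}_0+\delta_w\mathbf{v}$, use $s\le l$ to test the defining relation of $\delta_w\mathbf{v}$ with $\phi$, integrate by parts on $T$, and use $s\le j$ together with the flat faces of the polytopal element to trade $\mathbf{v}_0$ for $Q_b\mathbf{v}_0$ (resp.\ $Q_b\mathbf{w}$ for $\mathbf{w}$) against $\phi\cdot\mathbf{n}$; the second identity then follows by specializing to $Q_h\mathbf{w}$ and adding and subtracting $\mathbf{w}$ in the volume term. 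No gaps.
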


The following is a useful result for the analysis on convergence and solution existence of the gWG scheme.
\begin{lemma}\label{lem:coercive} Assume $\gamma\leq 0$ and $-\frac{\nabla\cdot\beta}{2}\geq C_0\geq0$.
There exists a constant $\zeta=\frac{C\Vert\beta\Vert_{\infty}+\Vert\beta\Vert_{\infty}^2}{2\rho}$ such that
\begin{equation*}
\begin{aligned}
((\beta\cdot\nabla_w)\mathbf{v}_h, \mathbf{v}_0) 
\geq C_0\Vert\mathbf{v}_0\Vert^2-\frac{1}{2\rho}s_1(\mathbf{v}_h, \mathbf{v}_h)-\epsilon\Vert\nabla_w\mathbf{v}_h\Vert^2,
\end{aligned}
\end{equation*}
for all $\mathbf{v}_h\in \mathbf{V}_h^0$.
\end{lemma}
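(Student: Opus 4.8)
The plan is to test the weak convective term against $\mathbf v_0$, perform an element‑by‑element integration by parts to expose a coercive volume contribution, and control the resulting interface terms by the stabilization $s_1$ and by $\epsilon\|\nabla_w\mathbf v_h\|^2$. Writing $((\beta\cdot\nabla_w)\mathbf v_h,\mathbf v_0)=(\nabla_w\mathbf v_h,\mathbf v_0\otimes\beta)$ with $[\mathbf v_0\otimes\beta]_{ij}=(\mathbf v_0)_i\beta_j$, and decomposing $\nabla_w\mathbf v_h=\nabla\mathbf v_0+\delta_w\mathbf v_h$ on each $T$, the polynomial part integrates by parts to
\[
(\nabla\mathbf v_0,\mathbf v_0\otimes\beta)_T=-\tfrac12\bigl((\nabla\cdot\beta)\mathbf v_0,\mathbf v_0\bigr)_T+\tfrac12\langle(\beta\cdot\mathbf n)\mathbf v_0,\mathbf v_0\rangle_{\partial T},
\]
while the defining relation of $\delta_w$ in Definition~\ref{weakgradient}, used with the test matrix $\bm{Q}_l(\mathbf v_0\otimes\beta)$ (the $L^2(T)$‑projection of $\mathbf v_0\otimes\beta$ onto $[P_l(T)]^{d\times d}$) together with $(\delta_w\mathbf v_h,(I-\bm{Q}_l)(\mathbf v_0\otimes\beta))_T=0$, turns the nonpolynomial part into $(\delta_w\mathbf v_h,\mathbf v_0\otimes\beta)_T=\langle\mathbf v_b-Q_b\mathbf v_0,\,\bm{Q}_l(\mathbf v_0\otimes\beta)\cdot\mathbf n\rangle_{\partial T}$. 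Summing over $T$, the volume term $-\tfrac12\int_\Omega(\nabla\cdot\beta)|\mathbf v_0|^2$ is $\ge C_0\|\mathbf v_0\|^2$ by the hypothesis on $\nabla\cdot\beta$, so the whole task reduces to bounding the collected edge integrals from below.

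The key point for the edge integrals is to keep the two boundary contributions together rather than applying Cauchy--Schwarz to $(\nabla_w\mathbf v_h,\mathbf v_0\otimes\beta)$ directly, which would only give a lower bound of the form $-\epsilon\|\nabla_w\mathbf v_h\|^2-\tfrac{C}{\epsilon}\|\beta\|_\infty^2\|\mathbf v_0\|^2$, with the wrong sign on $\|\mathbf v_0\|^2$. Substituting $\mathbf v_0=\mathbf v_b+(\mathbf v_0-\mathbf v_b)$ on each $\partial T$ and expanding, the part purely quadratic in $\mathbf v_b$ is $\tfrac12\sum_T\langle(\beta\cdot\mathbf n)\mathbf v_b,\mathbf v_b\rangle_{\partial T}$, which vanishes: the two contributions on an interior edge cancel because $\mathbf v_b$ is single valued and the two outward normals are opposite, and the contributions on $\partial\Omega$ vanish because $\mathbf v_b=Q_b\mathbf g=\mathbf 0$ for $\mathbf v_h\in\mathbf V_h^0$. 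In the case $Q_b\mathbf v_0=\mathbf v_0$ and $\bm{Q}_l(\mathbf v_0\otimes\beta)=\mathbf v_0\otimes\beta$ (for instance $j,l\ge k$ with $\beta$ piecewise constant) the remainder collapses, after a short algebraic cancellation, exactly to $-\tfrac12\sum_T\langle(\beta\cdot\mathbf n)(\mathbf v_0-\mathbf v_b),\mathbf v_0-\mathbf v_b\rangle_{\partial T}$, and this is $\ge-\tfrac12\|\beta\|_\infty\sum_T h_T^{-\gamma}\,h_T^{\gamma}\|\mathbf v_0-\mathbf v_b\|_{\partial T}^2\ge-\tfrac{\|\beta\|_\infty}{2\zeta}\,s_1(\mathbf v_h,\mathbf v_h)$, the assumption $\gamma\le0$ entering through $h_T^{-\gamma}\le1$. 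Choosing $\zeta=\tfrac{C\|\beta\|_\infty+\|\beta\|_\infty^2}{2\rho}$ with $C$ large enough (in terms of $\rho$) makes this coefficient $\le\tfrac1{2\rho}$, which is the claimed estimate; in this case the $\epsilon$-term is not even needed.

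In the general situation the discrepancies $(I-Q_b)\mathbf v_0$ on $\partial T$ and $(I-\bm{Q}_l)(\mathbf v_0\otimes\beta)$, together with the oscillation of $\beta$, produce extra interface terms. These I would estimate by the trace and inverse inequalities and by the local approximation $\|\beta-\bar\beta_T\|_{\infty,T}\le Ch_T\|\nabla\beta\|_\infty$; the mesh weights that come out are of the form $h_T^{1-\gamma}\le C$, again by $\gamma\le0$, and after a discrete Poincar\'e inequality on $\mathbf V_h^0$ the remainder is absorbed into $\epsilon\|\nabla_w\mathbf v_h\|^2$ and into the room left in $\tfrac1{2\rho}s_1$ for $h$ small. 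The main obstacle is precisely this last accounting step: one must verify that the $\mathbf v_b$-cancellation is complete enough that no $O(1)$-in-$h$ multiple of $\|\mathbf v_0\|^2$ with an unfavorable sign survives, and then balance $\zeta$ and $\epsilon$ against the surviving mesh-dependent weights so that the residue fits under $\tfrac1{2\rho}s_1+\epsilon\|\nabla_w\mathbf v_h\|^2$.
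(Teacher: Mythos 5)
Your proposal follows essentially the same route as the paper's proof: element-wise integration by parts of $(\nabla\mathbf{v}_0,\beta\cdot\mathbf{v}_0^t)_T$ to expose the coercive volume term $-\tfrac12(\mathbf{v}_0,(\nabla\cdot\beta)\mathbf{v}_0)_T$, use of the defining relation of $\delta_w$ to turn $(\delta_w\mathbf{v}_h,\cdot)_T$ into a boundary pairing against $\mathbf{v}_b-Q_b\mathbf{v}_0$, cancellation of $\sum_{T}\langle\mathbf{v}_b,\beta\cdot\mathbf{v}_b^t\cdot\mathbf{n}\rangle_{\partial T}$, and absorption of the surviving quadratic jump term $-\tfrac12\sum_T\langle\mathbf{v}_0-\mathbf{v}_b,\beta\cdot(\mathbf{v}_0-\mathbf{v}_b)^t\cdot\mathbf{n}\rangle_{\partial T}$ into $\tfrac1{2\rho}s_1$ using $\gamma\le0$; your clean-case identity is exactly the one the paper derives (the paper projects with $\bm{Q}_s$, $s=\min\{j,l\}$, and so also keeps a volume residual $((I-\bm{Q}_s)\beta\cdot\mathbf{v}_0^t,\delta_w\mathbf{v}_h)_T$, but this is a minor variant of your $\bm{Q}_l$ choice). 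The only step you leave open is the one you flag yourself: whether the projection discrepancies can leak an $O(1)$ multiple of $\Vert\mathbf{v}_0\Vert^2$ with the unfavorable sign. They cannot, and the accounting is short: since $\bm{Q}_s$ reproduces constants, each discrepancy term carries a factor $h^{1/2}\Vert\nabla\mathbf{v}_0\Vert_T$ (trace plus approximation), and the paper's estimate \eqref{eq:nabla1} converts $\Vert\nabla\mathbf{v}_0\Vert_T$ into $\Vert\nabla_w\mathbf{v}_h\Vert_T+Ch^{-1/2}\Vert\mathbf{v}_b-Q_b\mathbf{v}_0\Vert_{\partial T}$; Young's inequality then deposits everything into $Ch\Vert\nabla_w\mathbf{v}_h\Vert^2$ --- which is precisely why the statement carries the term $\epsilon\Vert\nabla_w\mathbf{v}_h\Vert^2$ with $\epsilon=Ch\to0$, rather than being avoidable as in your special case --- plus $\tfrac{C\Vert\beta\Vert_\infty^2}{\zeta}h^{-\gamma}s_1(\mathbf{v}_h,\mathbf{v}_h)$, with $h^{-\gamma}\le1$ again by $\gamma\le0$, and no free-standing $\Vert\mathbf{v}_0\Vert^2$ ever appears. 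With that bookkeeping supplied, your argument coincides with the paper's.
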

\begin{proof}From the definition of the weak gradient, we have for each $\mathbf{w}_h\in V_h^0$
\begin{equation*}
\begin{aligned}
&((\beta\cdot\nabla_w)\mathbf{v}_h, \mathbf{w}_0)_T =(\nabla\mathbf{v}_0+\delta_w\mathbf{v}_h, \beta\cdot\mathbf{w}_0^t)_T \\
=&-(\mathbf{v}_0,\nabla\cdot( \beta\cdot\mathbf{w}_0^t))_T +\langle\mathbf{v}_0,  \beta\cdot\mathbf{w}_0^t\cdot\mathbf{n}\rangle_{\pT}+(\delta_w\mathbf{v}_h, \beta\cdot\mathbf{w}_0^t)_T \\
=&-(\mathbf{v}_0,\nabla\cdot\beta\mathbf{w}_0)_T-(\beta\cdot\mathbf{v}_0^t,\nabla\mathbf{w}_0)_T +\langle\mathbf{v}_0,  \beta\cdot\mathbf{w}_0^t\cdot\mathbf{n}\rangle_\pT+(\delta_w\mathbf{v}_h, \beta\cdot\mathbf{w}_0^t)_T \\
=&-(\mathbf{v}_0,\nabla\cdot\beta\mathbf{w}_0)_T-(\beta\cdot\mathbf{v}_0^t,\nabla_w\mathbf{w}_h)_T +(\beta\cdot\mathbf{v}_0^t, \delta_w\mathbf{w}_h)_T\\
&+\langle\mathbf{v}_0,  \beta\cdot\mathbf{w}_0^t\cdot\mathbf{n}\rangle_\pT+(\delta_w\mathbf{v}_h, \beta\cdot\mathbf{w}_0^t)_T. 
\end{aligned}
\end{equation*}
From Definition \ref{weakgradient} and $\langle\mathbf{v}_0, \beta\cdot\mathbf{v}_b^t\cdot\mathbf{n}\rangle_\pT=\langle\mathbf{v}_b, \beta\cdot\mathbf{v}_0^t\cdot\mathbf{n}\rangle_\pT$, we have by taking $\mathbf{w}_h=\mathbf{v}_h$ in the above identity,
\begin{equation*}
\begin{aligned}
((\beta\cdot\nabla_w)\mathbf{v}_h, \mathbf{v}_0)_T=&-\frac{1}{2}(\mathbf{v}_0,\nabla\cdot\beta\mathbf{v}_0)_T+
\langle\mathbf{Q}_s(\beta\cdot\mathbf{v}_0^t)\cdot\mathbf{n}, \mathbf{v}_b-Q_b\mathbf{v}_0\rangle_\pT\\
&+\frac{1}{2}\langle\mathbf{v}_0, \beta\cdot\mathbf{v}_0^t\cdot\mathbf{n}\rangle_\pT
+((I-\mathbf{Q}_s)\beta\cdot\mathbf{v}_0^t, \delta_w\mathbf{v}_h)_T\\
=&-\frac{1}{2}(\mathbf{v}_0,\nabla\cdot\beta\mathbf{v}_0)_T-\langle\mathbf{Q}_s(\beta\cdot\mathbf{v}_0^t)\cdot\mathbf{n}, \mathbf{v}_0-\mathbf{v}_b\rangle_\pT \\
&+\frac{1}{2}\langle\mathbf{v}_0, \beta\cdot\mathbf{v}_0^t\cdot\mathbf{n}\rangle_\pT
+((I-\mathbf{Q}_s)\beta\cdot\mathbf{v}_0^t, \delta_w\mathbf{v}_h)_T\\
=&-\frac{1}{2}(\mathbf{v}_0,\nabla\cdot\beta\mathbf{v}_0)_T+\langle(I-\mathbf{Q}_s)(\beta\cdot\mathbf{v}_0^t)\cdot\mathbf{n}, \mathbf{v}_0-\mathbf{v}_b\rangle_\pT \\
&+ ((I-\mathbf{Q}_s)\beta\cdot\mathbf{v}_0^t, \delta_w\mathbf{v}_h)_T
-\frac{1}{2}\langle\mathbf{v}_0-\mathbf{v}_b, \beta\cdot(\mathbf{v}_0-\mathbf{v}_b)^t\cdot\mathbf{n}\rangle_\pT\\
&+\frac12 \langle\mathbf{v}_b, \beta\cdot\mathbf{v}_b^t\cdot\mathbf{n}\rangle_\pT.
\end{aligned}
\end{equation*}
By summing over all elements $T\in \T_h$ and noticing 
$\sum_{T\in\T_h}\langle\mathbf{v}_b, \beta\cdot\mathbf{v}_b^t\cdot\mathbf{n}\rangle_\pT=0$ we obtain
\begin{equation*}
\begin{aligned}
((\beta\cdot\nabla_w)\mathbf{v}_h, \mathbf{v}_0)_{\T_h}
=&-\frac{1}{2}(\mathbf{v}_0,\nabla\cdot\beta\mathbf{v}_0)_{\T_h}+\langle(I-\mathbf{Q}_s)(\beta\cdot\mathbf{v}_0^t)\cdot\mathbf{n}, \mathbf{v}_0-\mathbf{v}_b\rangle_{\pT_h} \\
&+ ((I-\mathbf{Q}_s)\beta\cdot\mathbf{v}_0^t, \delta_w\mathbf{v}_h)_{\T_h}
-\frac{1}{2}\langle\mathbf{v}_0-\mathbf{v}_b, \beta\cdot(\mathbf{v}_0-\mathbf{v}_b)^t\cdot\mathbf{n}\rangle_{\pT_h}.
\end{aligned}
\end{equation*}
Observe the following estimate from the property of $L^2$ projections:
\begin{equation}\label{eq:nabla1}
\begin{aligned}
\Vert\nabla\mathbf{v}_0\Vert_T\leq&\Vert\nabla_w\mathbf{v}_h\Vert_T+\Vert\delta_w\mathbf{v}_h\Vert_T
\leq\Vert\nabla_w\mathbf{v}_h\Vert_T+Ch^{-\frac{1}{2}}\Vert\mathbf{v}_b-Q_b\mathbf{v}_0\Vert_{\partial T}
\end{aligned}
\end{equation}
and 
\begin{equation}\label{eq:nabla2}
\begin{aligned}
\Vert\mathbf{v}_b-\mathbf{v}_0\Vert_{\partial T}\leq&\Vert\mathbf{v}_b-Q_b\mathbf{v}_0\Vert_{\partial T}+\Vert Q_b\mathbf{v}_0-\mathbf{v}_0\Vert_{\partial T}
\leq\Vert\mathbf{v}_b-Q_b\mathbf{v}_0\Vert_{\partial T}+Ch^{\frac{1}{2}}\Vert \nabla\mathbf{v}_0\Vert_{T}\\
\leq&C\Vert\mathbf{v}_b-Q_b\mathbf{v}_0\Vert_{\partial T}+Ch^{\frac{1}{2}}\Vert\nabla_w\mathbf{v}_h\Vert_{T}.
\end{aligned}
\end{equation}
From the Cauchy-Schwarz inequality, \eqref{eq:nabla1}, \eqref{eq:nabla2}, the construction of $\delta_w\mathbf{v}_h$,  and the trace inequality, we obtain
\begin{equation*}
\begin{aligned}
&\left|\langle(I-\mathbf{Q}_s)(\beta\cdot\mathbf{v}_0^t)\cdot\mathbf{n}, \mathbf{v}_0-\mathbf{v}_b\rangle_{\pT_h}+((I-\mathbf{Q}_s)\beta\cdot\mathbf{v}_0^t, \delta_w\mathbf{v}_h)_{\T_h}\right|\\
\leq &C \Vert\beta\Vert_{\infty}\sum_{T\in\mathcal{T}_h}h^{\frac{1}{2}}\Vert\nabla\mathbf{v}_0\Vert_T\Big( \Vert \mathbf{v}_0-\mathbf{v}_b\Vert_{\partial T}+\Vert Q_b\mathbf{v}_0-\mathbf{v}_b\Vert_{\partial T}\Big)\\
\leq& Ch\Vert\nabla\mathbf{v}_0\Vert^2 +  Ch\Vert\nabla_w\mathbf{v}_h\Vert^2
+\Vert\beta\Vert^2_{\infty}\Vert Q_b\mathbf{v}_0-\mathbf{v}_b\Vert^2_{\partial T}\\
\leq& Ch\Vert\nabla_w\mathbf{v}_h\Vert^2+\frac{\Vert\beta\Vert^2_{\infty}}{\zeta}h^{-\gamma}s_1(\mathbf{v}_h, \mathbf{v}_h).
\end{aligned}
\end{equation*}

Next, using the Cauchy-Schwarz inequality and \eqref{eq:nabla2} again leads to  
\begin{equation*}
\begin{aligned}
\left|\langle\mathbf{v}_0-\mathbf{v}_b, \beta\cdot(\mathbf{v}_0-\mathbf{v}_b)^t\cdot\mathbf{n}\rangle_{\pT_h}\right|
\leq&  \sum_{T\in\mathcal{T}_h}\Vert\beta\Vert_{\infty}\Vert\mathbf{v}_b-\mathbf{v}_0\Vert_{\partial T}^2\\
\leq&Ch\Vert\nabla_w\mathbf{v}_h\Vert^2+\frac{C\Vert\beta\Vert_{\infty}}{\zeta}h^{-\gamma}s_1(\mathbf{v}_h, \mathbf{v}_h).
\end{aligned}
\end{equation*}
By combining the above inequality with $\gamma\leq 0$, we get
\begin{equation*}
\begin{aligned}
&((\beta\cdot\nabla_w)\mathbf{v}_h, \mathbf{v}_0) 
\geq C_0\Vert\mathbf{v}_0\Vert^2-Ch\Vert\nabla_w\mathbf{v}_h\Vert^2
-(C\Vert\beta\Vert_{\infty}+\Vert\beta\Vert_{\infty}^2)\frac{1}{\zeta}s_1(\mathbf{v}_h, \mathbf{v}_h).
\end{aligned}
\end{equation*}
By choosing $(C\Vert\beta\Vert_{\infty}+\Vert\beta\Vert_{\infty}^2)\frac{1}{\zeta}=\frac{1}{2\rho}$, 
i.e $\zeta=\frac{C\Vert\beta\Vert_{\infty}+\Vert\beta\Vert_{\infty}^2}{2\rho}$, we arrive at
\begin{equation*}
\begin{aligned}
&((\beta\cdot\nabla_w)\mathbf{v}_h, \mathbf{v}_0) \geq C_0\Vert\mathbf{v}_0\Vert^2-Ch\Vert\nabla_w\mathbf{v}_h\Vert^2
-\frac{1}{2\rho}s_1(\mathbf{v}_h, \mathbf{v}_h).
\end{aligned}
\end{equation*}
When $h\rightarrow 0$, we have $\epsilon=Ch\rightarrow 0$. This completes the proof of the lemma.
\end{proof}
\medskip

Using Lemmas \ref{lem:norm}, \ref{lem:inf-sup}  and \ref{lem:coercive}, we obtain the following theorem.
\begin{theorem}\label{th:nique}
Assume $\nabla\cdot\beta\leq 0$. The generlized weak Galerkin method \eqref{sch:WG_FEM} has one and only one solution for sufficiently small meshsize $h$.
\end{theorem}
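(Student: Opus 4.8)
The plan is to use that \eqref{sch:WG_FEM} is a square linear system for the unknowns $(\mathbf{u}_h,p_h)\in\mathbf{V}_h\times W_h$ (with the boundary value $\mathbf{u}_b=Q_b\mathbf{g}$ prescribed), so that existence is equivalent to uniqueness; it therefore suffices to show that the homogeneous problem --- data $\mathbf{f}=\mathbf{0}$, $\mathbf{g}=\mathbf{0}$, hence $\mathbf{u}_h\in\mathbf{V}_h^0$ --- admits only the zero solution.

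First I would test the momentum equation of \eqref{sch:WG_FEM} with $\mathbf{v}_h=\mathbf{u}_h$ and the mass equation with $q_h=p_h$, and add the two identities; the pressure terms $\pm(p_h,\nabla_w\cdot\mathbf{u}_h)$ cancel and one is left with
\begin{equation*}
\mu\|\nabla_w\mathbf{u}_h\|^2+\rho\big((\beta\cdot\nabla_w)\mathbf{u}_h,\mathbf{u}_0\big)+s_1(\mathbf{u}_h,\mathbf{u}_h)+s_2(p_h,p_h)=0.
\end{equation*}
Next I would invoke Lemma \ref{lem:coercive} (its hypotheses are met: we take $\gamma\le 0$, and $\nabla\cdot\beta\le 0$ supplies a constant $C_0\ge 0$), which bounds the convective term below by $\rho C_0\|\mathbf{u}_0\|^2-\tfrac12 s_1(\mathbf{u}_h,\mathbf{u}_h)-\rho\,Ch\,\|\nabla_w\mathbf{u}_h\|^2$. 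Substituting gives
\begin{equation*}
(\mu-\rho Ch)\|\nabla_w\mathbf{u}_h\|^2+\rho C_0\|\mathbf{u}_0\|^2+\tfrac12 s_1(\mathbf{u}_h,\mathbf{u}_h)+s_2(p_h,p_h)\le 0.
\end{equation*}
For $h$ small enough that $\mu-\rho Ch>0$, every term on the left is nonnegative, hence each vanishes. In particular $\3bar\mathbf{u}_h\3bar^2=\|\nabla_w\mathbf{u}_h\|^2+s_1(\mathbf{u}_h,\mathbf{u}_h)=0$, so Lemma \ref{lem:norm} forces $\mathbf{u}_h\equiv\mathbf{0}$, and we also record $s_2(p_h,p_h)=0$.

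Finally, with $\mathbf{u}_h=\mathbf{0}$ the momentum equation collapses to $b(\mathbf{v}_h,p_h)=(p_h,\nabla_w\cdot\mathbf{v}_h)=0$ for all $\mathbf{v}_h\in\mathbf{V}_h^0$. Taking the test function $\mathbf{v}_h$ furnished by Lemma \ref{lem:inf-sup} and using the first line of \eqref{inf_supconditon} yields $0\ge\tfrac12\|p_h\|^2-Ch^{1-\alpha}\sum_{e\in\mathcal{E}_I}h_e^\alpha\|\llbracket p_h\rrbracket\|_e^2$; since $s_2(p_h,p_h)=0$, in the stabilized case ($\sigma=1$) the jump sum vanishes and $p_h=0$ follows at once. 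Hence the homogeneous system has only the trivial solution, so \eqref{sch:WG_FEM} is uniquely solvable. I expect the crux to be the second step: one must be sure the destabilizing contribution $-\rho Ch\|\nabla_w\mathbf{u}_h\|^2$ coming from the non-symmetric convection term really is of order $h$ --- which is precisely the content of Lemma \ref{lem:coercive} through the choice of $\zeta$ --- so that the viscous term can absorb it once $h$ is small; controlling the pressure when the pressure stabilization is inactive is the only further point requiring care, since Lemma \ref{lem:inf-sup} then provides only a perturbed inf-sup bound.
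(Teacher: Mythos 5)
Your proposal is correct and follows essentially the route the paper intends: it supplies precisely the ``routine'' details the authors omit, namely that the square homogeneous system is tested with $(\mathbf{u}_h,p_h)$, the coercivity estimate \eqref{eq:coercive} (Lemma \ref{lem:coercive}) absorbs the convective term for small $h$ so that Lemma \ref{lem:norm} forces $\mathbf{u}_h=0$, and Lemma \ref{lem:inf-sup} then recovers $p_h=0$. Your closing caveats --- that $\gamma\le 0$ is implicitly needed for Lemma \ref{lem:coercive}, and that the $\sigma=0$ case requires the unperturbed \emph{inf-sup} bound (available when $n\le j$) --- are accurate and match the paper's own remarks.
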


From the assumption, there exists $C_0$ such that $-\frac{\nabla\cdot\beta}{2}\geq C_0\geq0$. It follows that 
\begin{equation}\label{eq:coercive}
\begin{aligned}
&\mu(\nabla_w \mathbf{v}_h, \nabla_w \mathbf{v}_h)+\rho((\beta\cdot\nabla_w)\mathbf{v}_h, \mathbf{v}_0)+s_1(\mathbf{v}_h, \mathbf{v}_h)\\ 
\geq& (\mu-\epsilon)\Vert\nabla_w\mathbf{v}_h\Vert^2+C_0\rho\Vert\mathbf{v}_0\Vert^2+\frac{1}{2}s_1(\mathbf{v}_h, \mathbf{v}_h)\\
\geq& C_1\Vert\nabla_w\mathbf{v}_h\Vert^2+C_0\rho\Vert\mathbf{v}_0\Vert^2+\frac{1}{2}s_1(\mathbf{v}_h, \mathbf{v}_h),
\end{aligned}
\end{equation}
where $C_1>0$ for sufficiently small meshsize $h$. The coercivity estimate \eqref{eq:coercive} is a key component behind the proof of Theorem \ref{th:nique}. The proof of Theorem \ref{th:nique} is routine and details are left to interested readers.

\subsection{Error Equations of gWG for Steady-State Oseen Equation}\label{sec:3.1}
In this subsection, we shall derive the error equations for the velocity and pressure approximations. 
Denote the velocity error by $e_h^u={Q}_h\mathbf{u}-\mathbf{u}_h$ and the pressure error by $e_h^p={Q}_h^p p-p_h$.

From the properties of projection and Lemma \ref{lem:identity}, we obtain the following identity \cite{qi2021}, 
\begin{equation}\label{eq:spid2}
\begin{aligned}
(\nabla_wQ_h\mathbf{u}, \nabla_w \mathbf{v}_h)
=&-(\Delta\mathbf{u}, \mathbf{v}_0)- \langle (I-\bm{Q}_s)\nabla\mathbf{u}\cdot\mathbf{n}, \mathbf{v}_b-\mathbf{v}_0\rangle\\
&+((\bm{Q}_s-I)\nabla\mathbf{u}, \nabla\mathbf{v}_0)+((I-Q_0)\mathbf{u}, \nabla \cdot \bm{Q}_s\nabla_w \mathbf{v}_h)\\
&+(\nabla_wQ_h\mathbf{u}, (I-\bm{Q}_s)\nabla_w \mathbf{v}_h).
\end{aligned}
\end{equation}
For each $\mathbf{v}_h\in \mathbf{V}_h$, it follows from the definition of the generalized weak gradient and Lemma \ref{lem:identity} that
\begin{equation}\label{eq:first_term}
\begin{aligned}
&((\beta\cdot\nabla_w)Q_h\mathbf{u}, \mathbf{v}_0)=(\nabla_wQ_h\mathbf{u}, \beta\mathbf{v}_0^t)\\
=&(\nabla_wQ_h\mathbf{u}, \mathbf{Q}_s(\beta\mathbf{v}_0^t))+(\nabla_wQ_h\mathbf{u}, (I-\mathbf{Q}_s)\beta\mathbf{v}_0^t)\\
=&(\nabla\mathbf{u}, \mathbf{Q}_s(\beta\mathbf{v}_0^t))+((I-Q_0)\mathbf{u}, \nabla\cdot \mathbf{Q}_s(\beta\mathbf{v}_0^t))+(\nabla_wQ_h\mathbf{u}, (I-\mathbf{Q}_s)\beta\mathbf{v}_0^t)\\
=&(\nabla\mathbf{u}, \beta\mathbf{v}_0^t)-(\nabla\mathbf{u}, (I-\mathbf{Q}_s)\beta\mathbf{v}_0^t)+((I-Q_0)\mathbf{u}, \nabla\cdot \mathbf{Q}_s(\beta\mathbf{v}_0^t))\\
&+(\nabla_wQ_h\mathbf{u}, (I-\mathbf{Q}_s)\beta\mathbf{v}_0^t).
\end{aligned}
\end{equation}
With  $k-1\leq n\leq m$, we note that \cite{qi2021}
\begin{equation}\label{eq:sec_ter}
\begin{aligned}
(Q_h^p p, \nabla_w \cdot\mathbf{v}_h)&= -(\nabla p, \mathbf{v}_0)-\sum_{T\in\mathcal{T}_h} \langle (I-Q_h^p) p, (\mathbf{v}_b- \mathbf{v}_0)\cdot \mathbf{n}\rangle_{\partial T},
\end{aligned}
\end{equation}
Combining \eqref{eq:spid2} -\eqref{eq:sec_ter} leads to 
\begin{equation}\label{eq:project_term_st}
\begin{aligned}
&\mu(\nabla_wQ_h\mathbf{u}, \nabla_w\mathbf{v}_h)+\rho((\beta\cdot\nabla_w)Q_h\mathbf{u},\mathbf{v}_0)
-(Q_h^p p, \nabla_w\cdot\mathbf{v}_h)+s_1(Q_h\mathbf{u}, \mathbf{v}_h)\\
=&(\mathbf{f}, \mathbf{v}_0)+L^1_{u,p}(\mathbf{v}_h),
\end{aligned}
\end{equation}
where
\begin{equation*}
\begin{aligned}
L^1_{u,p}(\mathbf{v}_h):=&\mu\Big(\langle(I-\mathbf{Q}_s)\nabla\mathbf{u}\cdot\mathbf{n}, \mathbf{v}_0-\mathbf{v}_b\rangle-((I-\mathbf{Q}_s)\nabla\mathbf{u},\nabla\mathbf{v} _0)\\
&+((I-Q_0)\mathbf{u}, \nabla\cdot \mathbf{Q}_s\nabla_w\mathbf{v}_h)+(\nabla_wQ_h\mathbf{u}, (I-\mathbf{Q}_s)\nabla_w\mathbf{v}_h) \Big)\\
&+\rho\Big(-(\nabla\mathbf{u}, (I-\mathbf{Q}_s)\beta\mathbf{v}_0^t)+((I-Q_0)\mathbf{u}, \nabla\cdot \mathbf{Q}_s(\beta\mathbf{v}_0^t))\\
&+(\nabla_wQ_h\mathbf{u}, (I-\mathbf{Q}_s)\beta\mathbf{v}_0^t) \Big)
-\langle(I-Q_h^p)p, (\mathbf{v}_0-\mathbf{v}_b)\cdot\mathbf{n}\rangle+s_1(Q_h\mathbf{u},\mathbf{v}_h).
\end{aligned}
\end{equation*}
From \eqref{sch:WG_FEM} and \eqref{eq:project_term_st}, we arrive at the following error equations 
\begin{equation}\label{eq:error_eq_st}
\begin{aligned}
&\mu(\nabla_we^u_h, \nabla_w\mathbf{v}_h)+((\beta\cdot\nabla_w)e^u_h,\mathbf{v}_0)
-(e_h^p, \nabla_w\cdot\mathbf{v}_h)+s_1(e^u_{h}, \mathbf{v}_h)=L^1_{u,p}(\mathbf{v}_h),\\
&(\nabla_w\cdot e_h^u, q_h)+s_2(e_h^p, q_h)=L^2_{u,p}(q_h)
\end{aligned}
\end{equation}
for all $\mathbf{v}_h\in\mathbf{V}_h^0$ and $q_h\in W_h$, 
where
$$L^2_{u,p}(q_h)=-\langle(I-Q_b)\mathbf{u}, q_h\cdot\mathbf{n}\rangle+s_2(Q_h^p p, q_h).$$

\subsection{Convergence of gWG for Steady-State Oseen Equation}\label{sec:3.2}
In this subsection, our goal is to establish the error estimates in $L^2$ norm and energy norm for the gWG approximations for steady-state Oseen equation. 

\begin{lemma} Assume $\mathbf{u}\in [H^{i}(\Omega)]^d$ where $i=\max\{s+2,k+1,j+1\}$ and $p\in H^{n+1}(\Omega)$.  There exists a constant $C$ such that 
\begin{equation}\label{eq:error_term_es_st}
\begin{aligned}
\left|L^1_{u,p}(\mathbf{v}_h)\right|\leq& C\Big((1+h^{\frac{-\gamma-1}{2}})(h^{s+1}\Vert\mathbf{u}\Vert_{s+2}+h^{k}\Vert\mathbf{u}\Vert_{k+1} 
+h^{n+1}\Vert p\Vert_{n+1})\\
&+h^{\frac{\gamma+1}{2}}h^{k}\Vert\mathbf{u}\Vert_{k+1}\Big)\3bar\mathbf{v}_h\3bar,\\
\left|L^2_{u,p}(q_h)\right|\leq &C\Big(h^j h^{\frac{1-\alpha}{2}}\Vert\mathbf{u}\Vert_{j+1}+h^{n+1}h^{\frac{\alpha-1}{2}}\Vert p\Vert_{n+1}\Big)s_2^{\frac{1}{2}}(q_h,q_h)
\end{aligned}
\end{equation}
for all $\mathbf{v}_h\in\mathbf{V}_h^0$ and $q_h\in W_h$,
\end{lemma}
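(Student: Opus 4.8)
The plan is to bound $L^1_{u,p}(\mathbf{v}_h)$ and $L^2_{u,p}(q_h)$ term by term, using the Cauchy--Schwarz inequality, the approximation properties of the $L^2$ projections $Q_0$, $\bm{Q}_s$, $Q_h^p$ on the shape-regular partition, the trace inequality $\|\phi\|_{\partial T}^2\le C(h_T^{-1}\|\phi\|_T^2+h_T\|\nabla\phi\|_T^2)$, the inverse inequality, and the auxiliary bounds \eqref{eq:nabla1}--\eqref{eq:nabla2}. The organising observation is that each term of $L^1_{u,p}$ pairs an approximation residual of $\mathbf{u}$ or $p$ against exactly one of the four $\mathbf{v}_h$-quantities $\nabla_w\mathbf{v}_h$, $\nabla\mathbf{v}_0$, $\mathbf{v}_0-\mathbf{v}_b$ on $\partial T$, or $\mathbf{v}_0$; how that term is handled depends only on which of the four appears.

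I would first record the residual estimates $\|(I-\bm{Q}_s)\nabla\mathbf{u}\|_T\le Ch_T^{s+1}\|\mathbf{u}\|_{s+2,T}$, $\|(I-Q_0)\mathbf{u}\|_T\le Ch_T^{k+1}\|\mathbf{u}\|_{k+1,T}$, $\|(I-Q_h^p)p\|_T\le Ch_T^{n+1}\|p\|_{n+1,T}$ (together with their $\partial T$-versions from the trace inequality, carrying one fewer half power of $h_T$) and $\|\nabla_wQ_h\mathbf{u}-\nabla\mathbf{u}\|_T\le Ch_T^k\|\mathbf{u}\|_{k+1,T}$, the last following from the definition of $\delta_w$, Lemma~\ref{lem:identity}, and the trace inequality. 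For the terms pairing against $\nabla_w\mathbf{v}_h$ --- those containing $((I-Q_0)\mathbf{u},\nabla\cdot(\bm{Q}_s\nabla_w\mathbf{v}_h))$, $(\nabla_wQ_h\mathbf{u},(I-\bm{Q}_s)\nabla_w\mathbf{v}_h)$, $((I-Q_0)\mathbf{u},\nabla\cdot(\bm{Q}_s(\beta\mathbf{v}_0^t)))$ and $(\nabla_wQ_h\mathbf{u},(I-\bm{Q}_s)\beta\mathbf{v}_0^t)$ --- Cauchy--Schwarz, the inverse estimate $\|\nabla\cdot(\bm{Q}_s\phi)\|_T\le Ch_T^{-1}\|\phi\|_T$, the $L^2$-stability of $\bm{Q}_s$, and the self-adjointness of $I-\bm{Q}_s$ yield a clean bound $C(h^{s+1}\|\mathbf{u}\|_{s+2}+h^k\|\mathbf{u}\|_{k+1})\3bar\mathbf{v}_h\3bar$. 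For the terms carrying $\nabla\mathbf{v}_0$ or $\mathbf{v}_0-\mathbf{v}_b$ on $\partial T$ --- namely $\langle(I-\bm{Q}_s)\nabla\mathbf{u}\cdot\mathbf{n},\mathbf{v}_0-\mathbf{v}_b\rangle$, $((I-\bm{Q}_s)\nabla\mathbf{u},\nabla\mathbf{v}_0)$, and $\langle(I-Q_h^p)p,(\mathbf{v}_0-\mathbf{v}_b)\cdot\mathbf{n}\rangle$ --- I would invoke \eqref{eq:nabla1}--\eqref{eq:nabla2}, which split each estimate into a $\|\nabla_w\mathbf{v}_h\|$-part (the ``$1$'' term) and a $\sum_T\|\mathbf{v}_b-Q_b\mathbf{v}_0\|_{\partial T}$-part; since $s_1(\mathbf{v}_h,\mathbf{v}_h)=\zeta\sum_Th_T^\gamma\|\mathbf{v}_b-Q_b\mathbf{v}_0\|_{\partial T}^2$, a weighted Cauchy--Schwarz on the second part produces the factor $h^{-\gamma/2}h^{-1/2}=h^{(-\gamma-1)/2}$ multiplying $s_1^{1/2}(\mathbf{v}_h,\mathbf{v}_h)$, which is the origin of the prefactor $(1+h^{(-\gamma-1)/2})$. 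The leftover residual pieces that pair against $\mathbf{v}_0$ directly (the $(\nabla\mathbf{u},(I-\bm{Q}_s)\beta\mathbf{v}_0^t)$ term and the $\|\mathbf{v}_0\|$-parts of the previous group) are absorbed via the discrete Poincaré inequality $\|\mathbf{v}_0\|\le C(1+h^{(-\gamma-1)/2})\3bar\mathbf{v}_h\3bar$ on $\mathbf{V}_h^0$, which follows from \eqref{eq:nabla1}--\eqref{eq:nabla2} and a broken Poincaré inequality and whose constant degrades by precisely this factor when $\gamma\in(-1,0]$. Lastly, $s_1(Q_h\mathbf{u},\mathbf{v}_h)$ is bounded by Cauchy--Schwarz in the $s_1$-inner product and $s_1(Q_h\mathbf{u},Q_h\mathbf{u})\le Ch^{\gamma+2k+1}\|\mathbf{u}\|_{k+1}^2$, giving the separate term $Ch^{(\gamma+1)/2}h^k\|\mathbf{u}\|_{k+1}\3bar\mathbf{v}_h\3bar$. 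Summing all contributions gives the first inequality of \eqref{eq:error_term_es_st}.

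For $L^2_{u,p}(q_h)$ I would rewrite $\langle(I-Q_b)\mathbf{u},q_h\cdot\mathbf{n}\rangle$ by regrouping the sum over $\partial T$ edge by edge: since $\mathbf{u}\in[H^1(\Omega)]^d$ has a single-valued trace and $Q_b$ acts face by face, $(I-Q_b)\mathbf{u}$ is single-valued on each interior edge, so the two element contributions along $e\in\mathcal{E}_I$ combine into $\langle(I-Q_b)\mathbf{u}\cdot\mathbf{n},\llbracket q_h\rrbracket\rangle_e$ (the boundary faces, where $\mathbf{u}=\mathbf{g}$ and $\mathbf{u}_b=Q_b\mathbf{g}$, are treated as in \cite{qi2021} and produce a contribution of the same order). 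A Cauchy--Schwarz with weights $h_e^{\mp\alpha/2}$, together with $\|(I-Q_b)\mathbf{u}\|_e\le Ch_e^{j+1/2}\|\mathbf{u}\|_{j+1}$ (local) and $s_2(q_h,q_h)=\sigma\sum_{e\in\mathcal{E}_I}h_e^\alpha\|\llbracket q_h\rrbracket\|_e^2$, then gives $Ch^{j}h^{(1-\alpha)/2}\|\mathbf{u}\|_{j+1}\,s_2^{1/2}(q_h,q_h)$. For $s_2(Q_h^p p,q_h)$ I would use $\llbracket Q_h^p p\rrbracket=\llbracket(Q_h^p-I)p\rrbracket$ (valid since $p$ is continuous), whence $\|\llbracket Q_h^p p\rrbracket\|_e\le Ch_e^{n+1/2}\|p\|_{n+1}$ locally, and the same weighted Cauchy--Schwarz gives $Ch^{n+1}h^{(\alpha-1)/2}\|p\|_{n+1}\,s_2^{1/2}(q_h,q_h)$. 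Adding the two yields the second inequality of \eqref{eq:error_term_es_st}.

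The main obstacle is the bookkeeping: one must follow how each half power of $h$ coming from a trace inequality interacts with the weights $h_T^\gamma$ in $s_1$ and $h_e^\alpha$ in $s_2$, so that the residual estimates land exactly on the prefactors $(1+h^{(-\gamma-1)/2})$ and $h^{(\gamma+1)/2}$ and so that every pairing with a $\mathbf{v}_h$- or $q_h$-quantity is turned into $\3bar\mathbf{v}_h\3bar$ or $s_2^{1/2}(q_h,q_h)$, leaving no stray $\|\mathbf{v}_0\|$ or $\|q_h\|$. The only non-routine analytic ingredient is the discrete Poincaré inequality used for the convective terms, where it is worth noting that for $\gamma\le-1$ it holds with an $h$-independent constant.
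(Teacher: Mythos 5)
Your proposal is correct and follows essentially the same route as the paper: a term-by-term estimate of $L^1_{u,p}$ and $L^2_{u,p}$ using Cauchy--Schwarz, the trace and inverse inequalities, the projection approximation properties, the bounds \eqref{eq:nabla1}--\eqref{eq:nabla2} leading to $\Vert\mathbf{v}_0-\mathbf{v}_b\Vert_{\partial T}\leq Ch^{1/2}(1+h^{(-\gamma-1)/2})\3bar\mathbf{v}_h\3bar$ and $\Vert\nabla\mathbf{v}_0\Vert\leq C(1+h^{(-\gamma-1)/2})\3bar\mathbf{v}_h\3bar$, and the edge-wise regrouping of $\langle(I-Q_b)\mathbf{u},q_h\cdot\mathbf{n}\rangle$ into jumps of $q_h$. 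The only cosmetic difference is that you route the $\mathbf{v}_0$-dependent convective terms through a discrete Poincar\'e inequality, whereas the paper bounds $\Vert(I-\mathbf{Q}_s)\beta\mathbf{v}_0^t\Vert$ by $Ch\Vert\nabla\mathbf{v}_0\Vert$ directly; both land within the stated bound.
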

\begin{proof}
From the Cauchy-Schwarz inequality, trace inequality, and approximation properties of $L^2$ projections, we have
\begin{equation*}
\begin{aligned}
|\langle(I-\mathbf{Q}_s)\nabla\mathbf{u}\cdot\mathbf{n}, \mathbf{v}_0-\mathbf{v}_b\rangle|
&\leq C\sum_{T\in\mathcal{T}_h}\Vert(I-\mathbf{Q}_s)\nabla\mathbf{u}\Vert_{\partial T}\Vert\mathbf{v}_0-\mathbf{v}_b\Vert_{\partial T}\\
&\leq Ch^{s+1}(1+h^{\frac{-\gamma-1}{2}})\Vert\mathbf{u}\Vert_{s+2}\3bar\mathbf{v}_h\3bar,
\end{aligned}
\end{equation*}
where we have used $\Vert\mathbf{v}_0-\mathbf{v}_b\Vert_{\partial T}\leq Ch^{\frac{1}{2}}(1+h^{\frac{-\gamma-1}{2}})\3bar\mathbf{v}_h\3bar$.
Analogously, we have 
\begin{equation*}
\begin{aligned}
|((I-\mathbf{Q}_s)\nabla\mathbf{u},\nabla\mathbf{v} _0)|\leq C\Vert(I-\mathbf{Q}_s)\nabla\mathbf{u} \Vert\Vert\nabla\mathbf{v} _0\Vert
\leq Ch^{s+1}(1+h^{\frac{-\gamma-1}{2}})\Vert\mathbf{u} \Vert_{s+2}\3bar\mathbf{v} _h\3bar,
\end{aligned}
\end{equation*}
where we have used the following inequalities
\begin{equation*}
\begin{aligned}
\Vert\delta_w\mathbf{v} _h\Vert^2&=\langle\mathbf{v} _b-Q_b\mathbf{v} _0, \delta_w\mathbf{v} _h\rangle
\leq C\sum_{T\in\mathcal{T}_h}h^{\frac{\gamma}{2}}\Vert\mathbf{v} _b-Q_b\mathbf{v} _0\Vert_{\partial T}h^{\frac{-\gamma-1}{2}}\Vert\delta_w\mathbf{v} _h\Vert_{T}
\end{aligned}
\end{equation*}
and 
$\Vert\nabla\mathbf{v} _0\Vert\leq \Vert\nabla_w\mathbf{v} _h\Vert+\Vert\delta_w\mathbf{v} _h\Vert\leq C(1+\frac{-\gamma-1}{2})\3bar\mathbf{v} _h\3bar$.
 
Next, from the inverse inequality we have 
\begin{equation*}
\begin{aligned}
|((I-Q_0)\mathbf{u}, \nabla\cdot \mathbf{Q}_s\nabla_w\mathbf{v}_h)|\leq Ch^k\Vert\mathbf{u} \Vert_{k+1}\3bar\mathbf{v} _h\3bar.
\end{aligned}
\end{equation*}
By using the definition of generalized weak gradient, we obtain
\begin{equation}\label{eq:L1esti}
\begin{aligned}
&|(\nabla_wQ_h\mathbf{u}, (I-\mathbf{Q}_s)\nabla_w\mathbf{v}_h)|=|(\nabla Q_0\mathbf{u}+\delta_wQ_h\mathbf{u}, (I-\mathbf{Q}_s)\nabla_w\mathbf{v}_h)|\\
\leq& C\Big(\Vert(I-\mathbf{Q}_s)\nabla Q_0\mathbf{u}\Vert+\Vert\delta_wQ_h\mathbf{u}\Vert\Big)\Vert(I-\mathbf{Q}_s)\nabla_w\mathbf{v}_h\Vert\\
\leq& C\Big(h^{s+1}\Vert\mathbf{u}\Vert_{s+2}+h^{-1}\Vert(I-Q_0)\mathbf{u}\Vert\Big)\3bar\mathbf{v} _h\3bar\\
\leq& C\Big(h^{s+1}\Vert\mathbf{u}\Vert_{s+2}+h^{k}\Vert\mathbf{u}\Vert_{k+1}\Big)\3bar\mathbf{v} _h\3bar,
\end{aligned}
\end{equation}
where we have used the fact that $\Vert\delta_wQ_h\mathbf{u}\Vert\leq Ch^{-1}\Vert(I-Q_0)\mathbf{u}\Vert$.

Next, the following estimates hold true: 
\begin{equation*}
\begin{aligned}
|(\nabla\mathbf{u}, (I-\mathbf{Q}_s)\beta\cdot\mathbf{v}_0^t)|&=|((I-\mathbf{Q}_s)\nabla\mathbf{u}, (I-\mathbf{Q}_s)\beta\cdot\mathbf{v}_0^t)|\\
&\leq C h^{s+1}\Vert\mathbf{u}\Vert_{s+2} h\Vert \nabla\mathbf{v}_0\Vert\\
&\leq Ch^{s+1}\Vert\mathbf{u}\Vert_{s+2}h(1+h^{\frac{-\gamma-1}{2}})\3bar\mathbf{v} _h\3bar.
\end{aligned}
\end{equation*}
Similar to the above inequality and \eqref{eq:L1esti}, we have 
\begin{equation*}
\begin{aligned}
|((I-Q_0)\mathbf{u}, \nabla\cdot \mathbf{Q}_s\beta\mathbf{v}_0^t)|
&\leq Ch^{k+1}\Vert\mathbf{u}\Vert_{k+1}(1+h^{\frac{-\gamma-1}{2}})\3bar\mathbf{v} _h\3bar,\\
|(\nabla_wQ_h\mathbf{u}, (I-\mathbf{Q}_s)\beta\mathbf{v}_0^t)|&
\leq C(h^{s+1}\Vert\mathbf{u}\Vert_{s+2}+h^{k}\Vert\mathbf{u}\Vert_{k+1})h(1+h^{\frac{-\gamma-1}{2}})\3bar\mathbf{v} _h\3bar.
\end{aligned}
\end{equation*}
From the construction of the stabilizer, we get
\begin{equation*}
\begin{aligned}
&|\langle(I-Q_h^p)p, (\mathbf{v}_0-\mathbf{v}_b)\cdot\mathbf{n}\rangle|\leq C(1+h^{\frac{-\gamma-1}{2}})h^{n+1}\Vert p\Vert_{n+1}\3bar\mathbf{v} _h\3bar,\\
&|s_1(Q_h\mathbf{u}, \mathbf{v}_h)|=\left|\sum_{T\in\mathcal{T}_h}\zeta h^{\gamma}\langle Q_b(I-Q_0)\mathbf{u}, \mathbf{v}_b-Q_b\mathbf{v}_0\rangle_{\partial T}\right|
\leq Ch^{\frac{\gamma+1}{2}}h^k\Vert\mathbf{u}\Vert_{k+1}\3bar\mathbf{v} _h\3bar.
\end{aligned}
\end{equation*} 
Based on the above inequalities, we obtain the desired estimate for $L^1_{u,p}(\mathbf{v}_h)$. 

Finally, from the following estimates
\begin{equation*}
\begin{aligned}
|\langle(I-Q_b)\mathbf{u}, q_h\cdot\mathbf{n}\rangle|&\leq Ch^j\Vert\mathbf{u}\Vert_{j+1}h^{\frac{-\alpha+1}{2}}s_2(q_h,q_h)^{\frac{1}{2}},\\
|s_2(Q_h^p p, q_h)|&\leq Ch^{n+1}\Vert p\Vert_{n+1}h^{\frac{\alpha-1}{2}}s_2(q_h,q_h)^{\frac{1}{2}},
\end{aligned}
\end{equation*} 
we may deduce the desired estimate for $L^2_{u, p}(q_h)$ as well. This completes the proof of the lemma.
\end{proof}

\begin{theorem}\label{th:ste_energy_er}
Let $(\mathbf{u}, p)$ be  the exact solution of \eqref{weakform} and $(\mathbf{u}_h, p_h)$ be its numerical approximation arising from  the {\em{g}}WG scheme \eqref{sch:WG_FEM}.
Assume that the velocity $\mathbf{u}\in [H^{i}(\Omega)]^d$, $i=\max\{s+2,k+1,j+1\}$ and the pressure $p\in H^{n+1}(\Omega)$. Let ${k-1}\leq n\leq \max\{m, k+1\}$ be satisfied. Then we have
\begin{equation}\label{erres_steady_energy}
\begin{aligned}
\3bar e_h^{u}\3bar^2&\leq CR_1,\\
\Vert e_h^{p}\Vert^2 &\leq C(1+h^{\gamma+1}+h^{2(1-\alpha)})R_1,
\end{aligned}
\end{equation}
where 
\begin{equation*}
\begin{aligned}
R_1=&h^{2(s+1)}(1+h^{-\gamma-1})\Vert \mathbf{u}\Vert^2_{s+2}+h^{2k}(1+h^{-\gamma-1}+h^{\gamma+1})\Vert \mathbf{u}\Vert^2_{k+1}\\
&+h^{2j}h^{1-\alpha}\Vert \mathbf{u}\Vert^2_{j+1}
+h^{2(n+1)}(1+h^{-\gamma-1}+h^{\alpha-1})\Vert p\Vert^2_{n+1}.
\end{aligned}
\end{equation*}
Moreover, when $\gamma=-1$ and $\alpha=1$, optimal order of convergence is achieved.
\end{theorem}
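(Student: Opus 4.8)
The plan is to run the standard saddle-point error analysis: an energy bound for the velocity error $e_h^u=Q_h\mathbf{u}-\mathbf{u}_h$ obtained from the coercivity \eqref{eq:coercive}, followed by a pressure bound extracted from the generalized \emph{inf-sup} condition (Lemma \ref{lem:inf-sup}). Since $\mathbf{u}_b=Q_b\mathbf{g}$ on $\partial\Omega$, the velocity error belongs to $\mathbf{V}_h^0$, so $\3bar\cdot\3bar$ is a genuine norm on it (Lemma \ref{lem:norm}) and \eqref{eq:coercive} applies with $\mathbf{v}_h=e_h^u$. First I would test the two error equations \eqref{eq:error_eq_st} with $\mathbf{v}_h=e_h^u$ and $q_h=e_h^p$ and add them; the coupling terms $\pm(e_h^p,\nabla_w\cdot e_h^u)$ cancel, leaving
\[
\mu\Vert\nabla_w e_h^u\Vert^2+\rho((\beta\cdot\nabla_w)e_h^u,e_{h,0}^u)+s_1(e_h^u,e_h^u)+s_2(e_h^p,e_h^p)=L^1_{u,p}(e_h^u)+L^2_{u,p}(e_h^p),
\]
where $e_{h,0}^u$ denotes the interior component of $e_h^u$.

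Next I would bound the left-hand side from below by \eqref{eq:coercive}, which gives $C\bigl(\3bar e_h^u\3bar^2+s_2(e_h^p,e_h^p)\bigr)$ for sufficiently small $h$, and the right-hand side from above by \eqref{eq:error_term_es_st}, which gives $C\Phi_1\3bar e_h^u\3bar+C\Phi_2\,s_2^{1/2}(e_h^p,e_h^p)$, with $\Phi_1,\Phi_2$ the bracketed factors appearing in \eqref{eq:error_term_es_st}. A Young's inequality on each product absorbs $\3bar e_h^u\3bar^2$ and $s_2(e_h^p,e_h^p)$ into the left-hand side and yields $\3bar e_h^u\3bar^2+s_2(e_h^p,e_h^p)\le C(\Phi_1^2+\Phi_2^2)$; squaring $\Phi_1$ and $\Phi_2$ and collecting powers of $h$ reproduces exactly $\Phi_1^2+\Phi_2^2\le CR_1$, establishing the first line of \eqref{erres_steady_energy} together with the auxiliary bound $s_2(e_h^p,e_h^p)\le CR_1$ that is needed below.

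For the pressure, I would apply Lemma \ref{lem:inf-sup} with $q_h=e_h^p$ to obtain $\mathbf{v}_h\in\mathbf{V}_h^0$ with $b(\mathbf{v}_h,e_h^p)\ge\tfrac12\Vert e_h^p\Vert^2-Ch^{1-\alpha}\sum_{e\in\mathcal{E}_I}h_e^\alpha\Vert\llbracket e_h^p\rrbracket\Vert_e^2$ and $\3bar\mathbf{v}_h\3bar\le C(1+h^{(1+\gamma)/2})\Vert e_h^p\Vert$, then plug this $\mathbf{v}_h$ into the first error equation of \eqref{eq:error_eq_st} to write $b(\mathbf{v}_h,e_h^p)=(e_h^p,\nabla_w\cdot\mathbf{v}_h)$ as $\mu(\nabla_we_h^u,\nabla_w\mathbf{v}_h)+\rho((\beta\cdot\nabla_w)e_h^u,\mathbf{v}_{h,0})+s_1(e_h^u,\mathbf{v}_h)-L^1_{u,p}(\mathbf{v}_h)$. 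Each term is $\le C(\3bar e_h^u\3bar+\Phi_1)\3bar\mathbf{v}_h\3bar$; for the convective one this uses $|((\beta\cdot\nabla_w)e_h^u,\mathbf{v}_{h,0})|\le\Vert\beta\Vert_\infty\Vert\nabla_we_h^u\Vert\,\Vert\mathbf{v}_{h,0}\Vert$ together with the discrete Poincaré inequality $\Vert\mathbf{v}_{h,0}\Vert\le C\3bar\mathbf{v}_h\3bar$ valid on $\mathbf{V}_h^0$. Hence $b(\mathbf{v}_h,e_h^p)\le C(\3bar e_h^u\3bar+\Phi_1)(1+h^{(1+\gamma)/2})\Vert e_h^p\Vert$; combining with the \emph{inf-sup} lower bound, using $\sum_e h_e^\alpha\Vert\llbracket e_h^p\rrbracket\Vert_e^2=s_2(e_h^p,e_h^p)$ when $\sigma=1$ (and dropping the jump term when $\sigma=0$), and applying Young's inequality to absorb $\tfrac14\Vert e_h^p\Vert^2$, yields $\Vert e_h^p\Vert^2\le C(1+h^{1+\gamma})(\3bar e_h^u\3bar^2+\Phi_1^2)+Ch^{1-\alpha}s_2(e_h^p,e_h^p)$. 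Inserting $\3bar e_h^u\3bar^2\le CR_1$, $\Phi_1^2\le CR_1$, $s_2(e_h^p,e_h^p)\le CR_1$ and $h^{1-\alpha}\le1+h^{2(1-\alpha)}$ for $0<h\le1$ gives the second line of \eqref{erres_steady_energy}; finally, setting $\gamma=-1$ and $\alpha=1$ makes $h^{-\gamma-1}=h^{\gamma+1}=h^{1-\alpha}=h^{2(1-\alpha)}=1$, so $R_1$ collapses to $C\bigl(h^{2(s+1)}\Vert\mathbf{u}\Vert_{s+2}^2+h^{2k}\Vert\mathbf{u}\Vert_{k+1}^2+h^{2j}\Vert\mathbf{u}\Vert_{j+1}^2+h^{2(n+1)}\Vert p\Vert_{n+1}^2\bigr)$, which is of optimal order.

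Most of this is routine Young-inequality bookkeeping and exponent tracking. The delicate spots are keeping the coercivity constant $C_1$ in \eqref{eq:coercive} positive while absorbing the $\epsilon=Ch$ term — which is what forces the smallness condition on $h$ — and handling the jump contribution $h^{1-\alpha}\sum_e h_e^\alpha\Vert\llbracket e_h^p\rrbracket\Vert_e^2$ inherited from the generalized \emph{inf-sup} condition, which is precisely why the pressure estimate carries the factor $1+h^{2(1-\alpha)}$ rather than a clean constant. I expect the main obstacle to be the discrete Poincaré inequality $\Vert\mathbf{v}_{h,0}\Vert\le C\3bar\mathbf{v}_h\3bar$ with a mesh-independent constant, needed to control the convective term against the \emph{inf-sup} test function; if it has not already been recorded alongside Lemma \ref{lem:norm}, it would have to be proved separately (e.g.\ by a scaling argument on the reference element).
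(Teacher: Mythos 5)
Your proposal follows the paper's own argument essentially step for step: test the error equations \eqref{eq:error_eq_st} with $(e_h^u,e_h^p)$ so the pressure--velocity coupling cancels, invoke the coercivity \eqref{eq:coercive} together with the bounds \eqref{eq:error_term_es_st} and Young's inequality for the energy estimate (plus the auxiliary bound on $s_2(e_h^p,e_h^p)$), and then recover the pressure from the generalized \emph{inf-sup} condition of Lemma \ref{lem:inf-sup} applied to $e_h^p$ and the first error equation. The only point you flag as uncertain, the discrete Poincar\'e inequality $\Vert\mathbf{v}_0\Vert\le C\3bar\mathbf{v}_h\3bar$ on $\mathbf{V}_h^0$, is indeed used by the paper without proof (it is stated explicitly only later, in the semi-discrete analysis), so your treatment is, if anything, slightly more careful than the original.
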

\begin{proof}
By choosing $\mathbf{v}_h=e_h^{u}$ and $q_h=e_h^p$ in \eqref{eq:error_eq_st},  we have
\begin{equation*}
\begin{aligned}
\mu\Vert\nabla_we_h^u\Vert^2+\rho((\beta\cdot\nabla_w)e_h^{u},e_h^{u})+s_1(e_h^u,e_h^u)+s_2(e^p_h,e^p_h)=L^1_{u,p}(e_h^u)+L^2_{u,p}(e_h^p).
\end{aligned}
\end{equation*}
From Lemma  \ref{lem:coercive} or the coercivity \eqref{eq:coercive} and  the estimate in \eqref{eq:error_term_es_st}, we have 
\begin{equation*}
\begin{aligned}
&C_1\Vert\nabla_we_h^u\Vert^2+ C_0\rho\Vert e_{h0}^u\Vert^2+\frac{1}{2}s_1(e_h^u,e_h^u)+s_2(e^p_h,e^p_h) \leq CR_1,
\end{aligned}
\end{equation*}
which gives rise to the error estimate for the velocity approximation in the energy norm.

Next, by using Lemma \ref{lem:inf-sup} and \eqref{eq:error_eq_st}, for $e_h^p$, there exists a $\mathbf{v}_h\in \mathbf{V}_h^0$ such that  
\begin{equation*}
\begin{aligned}
&\Vert e_h^p\Vert^2\leq b(\mathbf{v}_h, e_h^p)+Ch^{1-\alpha}s_2(e^p_h,e^p_h)\\
\leq&\mu(\nabla_we^u_h, \nabla_w\mathbf{v}_h)+\rho((\beta\cdot\nabla_w)e^u_h,\mathbf{v}_0)
+s_1(e^u_{h}, \mathbf{v}_h)-L^1_{u,p}(\mathbf{v}_h)+Ch^{1-\alpha}s_2(e^p_h,e^p_h).
\end{aligned}
\end{equation*}
Thus, from the estimate \eqref{eq:error_term_es_st}, we obtain the desired estimate for $\Vert e_h^p\Vert$ in the $L^2$ norm.
\end{proof}

In order to estimate the velocity in $L^2$ norm, we consider the dual problem for the steady-state Oseen equation \eqref{st_oseen_problem}: Seek $(\phi,\xi)$ such that 
\begin{equation}\label{dual_st_problem}
\begin{aligned}
-\mu\Delta \phi-\rho(\beta\cdot\nabla)\phi-\rho\nabla\cdot\beta\phi+\nabla \xi&=e_{h0}^u,~~~~~~~~~~\mbox{in}~\Omega, \\
\nabla\cdot\phi&=0,~~~~~~~~~~~~~\mbox{in}~\Omega,\\
\phi&=0, ~~~~~~~~~~~~~\mbox{on}~\partial\Omega.
\end{aligned}
\end{equation} 
Assume that the dual problem satisfies the following $H^2/H^1$ regularity assumption: 
$$\Vert\phi\Vert_2+\Vert\xi\Vert_1\leq C\Vert e_{h0}^u\Vert.$$

\begin{theorem}\label{th:ste_L2_er} Let $(\mathbf{u}, p)$ be  the exact solution of \eqref{weakform} and $(\mathbf{u}_h, p_h)$ be its numerical approximation arising from the {\em{g}}WG scheme \eqref{sch:WG_FEM}.  
Assume that the velocity $\mathbf{u}\in [H^{i}(\Omega)]^d$, $i=\max\{s+2,k+1,j+1\}$ and the pressure $p\in H^{n+1}(\Omega)$. Let ${k-1}\leq n\leq \max\{m, k+1\}$ and $j\geq 1$ be satisfied. 
Then there exists a constant such that
\begin{equation}\label{erres_steady_l2}
\begin{aligned}
&\Vert e_{h0}^{u}\Vert ^2\leq C(h^2+h^{2\alpha})(1+h^{\gamma+1}+h^{2(1-\alpha)})R_1.
\end{aligned}
\end{equation}
\end{theorem}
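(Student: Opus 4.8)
The plan is a standard Aubin--Nitsche duality argument against the adjoint problem \eqref{dual_st_problem}, whose load is precisely the $L^2$ part $e_{h0}^u$ of the velocity error. First I would note that $e_h^u=(e_{h0}^u,e_{hb}^u)$ lies in $\mathbf{V}_h^0$: since $\mathbf{u}=\mathbf{g}$ and $\mathbf{u}_b=Q_b\mathbf{g}$ on $\partial\Omega$, the trace part $e_{hb}^u=Q_b(\mathbf{u}-\mathbf{g})$ vanishes there. Hence $e_h^u$ is an admissible test function and Theorem \ref{th:ste_energy_er} supplies $\3bar e_h^u\3bar^2\le CR_1$, $s_2(e_h^p,e_h^p)\le CR_1$, and $\Vert e_h^p\Vert^2\le C(1+h^{\gamma+1}+h^{2(1-\alpha)})R_1$. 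Likewise $Q_h\phi=\{Q_0\phi,Q_b\phi\}$ belongs to $\mathbf{V}_h^0$ because $\phi=\mathbf{0}$ on $\partial\Omega$, so it too is an admissible test function.

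Second, I would derive a ``dual projected equation'' by repeating, for the pair $(\phi,\xi)$ in place of $(\mathbf{u},p)$, the computation that produced \eqref{eq:spid2}--\eqref{eq:project_term_st} and \eqref{eq:sec_ter}: Lemma \ref{lem:identity}, the $L^2$-projection commutation identities, and the integration-by-parts rewriting of $((\beta\cdot\nabla_w)\mathbf{v}_h,\mathbf{w}_0)$ from the proof of Lemma \ref{lem:coercive}, now using that \eqref{dual_st_problem} carries the formal adjoint $-\rho(\beta\cdot\nabla)\phi-\rho(\nabla\cdot\beta)\phi$ of the convection operator together with $\nabla\cdot\phi=0$. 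This yields, for all $\mathbf{v}_h\in\mathbf{V}_h^0$ and $q_h\in W_h$,
\begin{equation*}
\begin{aligned}
&\mu(\nabla_wQ_h\phi,\nabla_w\mathbf{v}_h)+\rho((\beta\cdot\nabla_w)\mathbf{v}_h,Q_0\phi)-(Q_h^p\xi,\nabla_w\cdot\mathbf{v}_h)+s_1(Q_h\phi,\mathbf{v}_h)=(\mathbf{v}_0,e_{h0}^u)+\widetilde{L}^1(\mathbf{v}_h),\\
&(\nabla_w\cdot Q_h\phi,q_h)+s_2(Q_h^p\xi,q_h)=\widetilde{L}^2(q_h),
\end{aligned}
\end{equation*}
where $\widetilde{L}^1,\widetilde{L}^2$ are residual functionals of exactly the same shape as $L^1_{u,p},L^2_{u,p}$ but with $(\mathbf{u},p)$ replaced by $(\phi,\xi)$. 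The point is that \eqref{eq:error_term_es_st} was proved generically, so it applies verbatim to $\widetilde{L}^1,\widetilde{L}^2$; since only $H^2/H^1$ regularity of the dual solution is available, every projection error of $\phi$ ($\Vert(I-\bm{Q}_s)\nabla\phi\Vert$, $\Vert(I-Q_0)\phi\Vert$, $\Vert(I-Q_b)\phi\Vert_{\partial T}$, $\Vert\delta_wQ_h\phi\Vert$, etc.) is bounded by $Ch\Vert\phi\Vert_2$ and every projection error of $\xi$ by $Ch\Vert\xi\Vert_1$, which gives $|\widetilde{L}^1(\mathbf{v}_h)|\le Ch(1+h^{(-\gamma-1)/2})(\Vert\phi\Vert_2+\Vert\xi\Vert_1)\3bar\mathbf{v}_h\3bar$ and a companion bound for $|\widetilde{L}^2(q_h)|$ in terms of $s_2(q_h,q_h)^{1/2}$.

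Third, the algebra: I would put $\mathbf{v}_h=e_h^u$, $q_h=e_h^p$ in the dual projected equation and $\mathbf{v}_h=Q_h\phi$, $q_h=Q_h^p\xi$ in the primal error equations \eqref{eq:error_eq_st}, then subtract. The $\mu$-term, the convection term $\rho((\beta\cdot\nabla_w)e_h^u,Q_0\phi)$, the $s_1$-term, and (after eliminating the two cross divergence terms through the divergence equations, using symmetry of $s_2$) the stabilization contributions all cancel, leaving
\begin{equation*}
\Vert e_{h0}^u\Vert^2=L^1_{u,p}(Q_h\phi)-\widetilde{L}^1(e_h^u)+\widetilde{L}^2(e_h^p)-L^2_{u,p}(Q_h^p\xi).
\end{equation*}
For $\widetilde{L}^1(e_h^u)$ and $\widetilde{L}^2(e_h^p)$ I combine the bounds of the previous paragraph with $\3bar e_h^u\3bar\le CR_1^{1/2}$, $s_2(e_h^p,e_h^p)^{1/2}\le CR_1^{1/2}$ and the elliptic regularity $\Vert\phi\Vert_2+\Vert\xi\Vert_1\le C\Vert e_{h0}^u\Vert$. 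For the ``primal'' terms the extra power of $h$ that duality must deliver comes from the arguments now being the smooth-function projections $Q_h\phi,Q_h^p\xi$: in $L^1_{u,p}(Q_h\phi)$ one has $\Vert Q_0\phi-Q_b\phi\Vert_{\partial T}\le Ch^{3/2}\Vert\phi\Vert_2$ (this is where $j\ge1$ enters, through $\Vert(I-Q_b)\phi\Vert_{\partial T}\le Ch^{3/2}\Vert\phi\Vert_2$) and $\Vert\nabla Q_0\phi\Vert\le C\Vert\phi\Vert_1$, so each term of \eqref{eq:error_term_es_st} acquires one additional factor of $h$, and analogously for $L^2_{u,p}(Q_h^p\xi)$; together with the velocity- and pressure-error bounds and the regularity estimate, each of the four terms is $\le C(h+h^\alpha)(1+h^{\gamma+1}+h^{2(1-\alpha)})^{1/2}R_1^{1/2}\Vert e_{h0}^u\Vert$. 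Dividing by $\Vert e_{h0}^u\Vert$ and squaring yields \eqref{erres_steady_l2}.

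I expect the main obstacle to be the second step: assembling the dual projected equation so that, after integrating the adjoint convection term by parts in the generalized weak sense, all mismatch quantities ($(I-\bm{Q}_s)$ terms, $\mathbf{v}_b-Q_b\mathbf{v}_0$ terms, interior-edge jumps) land inside $\widetilde{L}^1,\widetilde{L}^2$ in a form to which \eqref{eq:error_term_es_st} applies, and verifying that these residuals call only on the $H^2/H^1$ norms of $(\phi,\xi)$ so that the regularity assumption can be invoked. Once that bookkeeping is settled, the remainder is the routine Cauchy--Schwarz/trace/inverse-inequality accounting already carried out in the proofs of Lemma \ref{lem:coercive} and of \eqref{eq:error_term_es_st}.
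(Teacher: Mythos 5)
Your proposal is correct and follows essentially the same route as the paper: the same dual problem \eqref{dual_st_problem} with load $e_{h0}^u$, the same projected dual equation (your $\widetilde{L}^1$ is the paper's $L^3_{\phi,\xi}$ in \eqref{eq:project_term_st_dual}, and your $\widetilde{L}^2(e_h^p)$ collects the terms the paper writes explicitly as $s_2(e_h^p,Q_h\xi)+\langle(I-Q_b)\phi,e_h^p\cdot\mathbf{n}\rangle$), the same testing with $e_h^u$ against the error equations \eqref{eq:error_eq_st}, and the same term-by-term estimates using first-order projection bounds for $(\phi,\xi)$, the $H^2/H^1$ regularity, and Theorem \ref{th:ste_energy_er}. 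Your identification of where $j\geq 1$ enters (the extra half power in $\Vert(I-Q_b)\phi\Vert_{\partial T}$ inside $L^2_{u,p}(Q_h^p\xi)$) also matches the paper's remark.
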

Moreover, when $\gamma=-1$ and $\alpha=1$, the above error estimate yields the optimal order of convergence in $L^2$.
\begin{proof}
First, notice that 
\begin{equation*}
\begin{aligned}
&(Q_0\phi, (\beta\cdot\nabla_w)\mathbf{v}_h)
=(\phi, (\beta\cdot\nabla_w)\mathbf{v}_h)-((I-Q_0)\phi, (\beta\cdot\nabla_w)\mathbf{v}_h).
\end{aligned}
\end{equation*}
Next, from the definition of the generalized weak gradient we have
\begin{equation*}
\begin{aligned}
&(\phi, (\beta\cdot\nabla_w)\mathbf{v}_h)=(\beta\phi^t, \nabla_w\mathbf{v}_h)\\
=&(\mathbf{Q}_s\beta\phi^t, \nabla_w\mathbf{v}_h)+((I-\mathbf{Q}_s)\beta\phi^t, \nabla_w\mathbf{v}_h)\\
=&(\mathbf{Q}_s\beta\phi^t, \nabla\mathbf{v}_0)-\langle \mathbf{Q}_s\beta\phi^t\cdot\mathbf{n}, \mathbf{v}_0-\mathbf{v}_b\rangle+((I-\mathbf{Q}_s)\beta\phi^t, \nabla_w\mathbf{v}_h)\\
=&(\beta\phi^t, \nabla\mathbf{v}_0)-((I-\mathbf{Q}_s)\beta\phi^t, \nabla\mathbf{v}_0)-\langle \mathbf{Q}_s\beta\phi^t\cdot\mathbf{n}, \mathbf{v}_0-\mathbf{v}_b\rangle\\
&+((I-\mathbf{Q}_s)\beta\phi^t, \nabla_w\mathbf{v}_h).
\end{aligned}
\end{equation*}
It follows that
\begin{equation}\label{eq:second_term}
\begin{aligned}
&(Q_0\phi, (\beta\cdot\nabla_w)\mathbf{v}_h)
=(\beta\phi^t, \nabla\mathbf{v}_0)-((I-\mathbf{Q}_s)\beta\phi^t, \nabla\mathbf{v}_0)-\langle \mathbf{Q}_s\beta\phi^t\cdot\mathbf{n}, \mathbf{v}_0-\mathbf{v}_b\rangle\\
&+((I-\mathbf{Q}_s)\beta\phi^t, \nabla_w\mathbf{v}_h)-((I-Q_0)\phi, (\beta\cdot\nabla_w)\mathbf{v}_h)\\
=&-(\nabla\cdot(\beta\phi^t), \mathbf{v}_0)+\langle\beta\phi^t, \mathbf{v}_0\cdot\mathbf{n}\rangle-((I-\mathbf{Q}_s)\beta\phi^t, \nabla\mathbf{v}_0)-\langle \mathbf{Q}_s\beta\phi^t\cdot\mathbf{n}, \mathbf{v}_0-\mathbf{v}_b\rangle\\
&+((I-\mathbf{Q}_s)\beta\phi^t, \nabla_w\mathbf{v}_h)-((I-Q_0)\phi, (\beta\cdot\nabla_w)\mathbf{v}_h).
\end{aligned}
\end{equation}
From the property of $L^2$ projections, \eqref{eq:spid2}, \eqref{eq:sec_ter}, and \eqref{eq:second_term}, we have 
\begin{equation}\label{eq:project_term_st_dual}
\begin{aligned}
&\mu(\nabla_wQ_h\phi, \nabla_w\mathbf{v}_h)+\rho(Q_0\phi, (\beta\cdot\nabla_w)\mathbf{v}_h)
-(Q_h^p \xi, \nabla_w\cdot\mathbf{v}_h)+s_1(Q_h\phi, \mathbf{v}_h)\\
=&(e_{h0}^u, \mathbf{v}_0)+\rho\langle\beta\phi^t, \mathbf{v}_0\cdot\mathbf{n}\rangle+\mu\Big(\langle(I-\mathbf{Q}_s)\nabla\phi\cdot\mathbf{n}, \mathbf{v}_0-\mathbf{v}_b\rangle-((I-\mathbf{Q}_s)\nabla\phi,\nabla\mathbf{v} _0)\\
&+((I-Q_0)\phi, \nabla\cdot \mathbf{Q}_s\nabla_w\mathbf{v}_h)+(\nabla_wQ_h\phi, (I-\mathbf{Q}_s)\nabla_w\mathbf{v}_h) \Big)-\rho\Big((\nabla\mathbf{v}_0, (I-\mathbf{Q}_s)\beta\phi^t)\\
&-(\nabla_w\mathbf{v}_h, (I-\mathbf{Q}_s)\beta\phi^t) 
+\langle\mathbf{v}_0-\mathbf{v}_b, \mathbf{Q}_s(\beta\phi^t)\cdot\mathbf{n}\rangle+((I-Q_0)\phi, (\beta\cdot\nabla_w)\mathbf{v}_h) \Big)\\
&-\langle(I-Q_h^p)\xi, (\mathbf{v}_0-\mathbf{v}_b)\cdot\mathbf{n}\rangle+s_1(Q_h\phi,\mathbf{v}_h)\\
=&(e_{h0}^u, \mathbf{v}_0)+L^3_{\phi,\xi}(\mathbf{v}_h),
\end{aligned}
\end{equation}
where we have used $\langle\beta\phi^t, \mathbf{v}_b\cdot\mathbf{n}\rangle=0$ and 
$\langle\beta\phi^t, \mathbf{v}_0\cdot\mathbf{n} \rangle=\langle\mathbf{v}_0, \beta\phi^t\cdot\mathbf{n} \rangle$.
Here $L^3_{\phi,\xi}(\mathbf{v}_h)$ is defined as follows: 
\begin{equation*}
\begin{aligned}
&L^3_{\phi,\xi}(\mathbf{v}_h)=\mu\Big(\langle(I-\mathbf{Q}_s)\nabla\phi\cdot\mathbf{n}, \mathbf{v}_0-\mathbf{v}_b\rangle-((I-\mathbf{Q}_s)\nabla\phi,\nabla\mathbf{v} _0)\\
&+((I-Q_0)\phi, \nabla\cdot \mathbf{Q}_s\nabla_w\mathbf{v}_h)+(\nabla_wQ_h\phi, (I-\mathbf{Q}_s)\nabla_w\mathbf{v}_h) \Big)-\rho\Big((\nabla\mathbf{v}_0, (I-\mathbf{Q}_s)\beta\phi^t)\\
&-(\nabla_w\mathbf{v}_h, (I-\mathbf{Q}_s)\beta\phi^t) 
+\langle\mathbf{v}_0-\mathbf{v}_b, (\mathbf{Q}_s-I)(\beta\phi^t)\cdot\mathbf{n}\rangle+((I-Q_0)\phi, (\beta\cdot\nabla_w)\mathbf{v}_h) \Big)\\
&-\langle(I-Q_h^p)\xi, (\mathbf{v}_0-\mathbf{v}_b)\cdot\mathbf{n}\rangle+s_1(Q_h\phi,\mathbf{v}_h).
\end{aligned}
\end{equation*}
Taking $\mathbf{v}_h=e_h^u$ in \eqref{eq:project_term_st_dual} and using the error equation \eqref{eq:error_eq_st} lead to 
\begin{equation}\label{eq:l2_eq_ve_st}
\begin{aligned}
&\Vert e_{h0}^u\Vert ^2=\mu(\nabla_wQ_h\phi, \nabla_we_h^u)+\rho(Q_0\phi,(\beta\cdot\nabla_w)e_h^u)
-(Q_h^p \xi, \nabla_w\cdot e_{h}^u)\\
&+s_1(Q_h\phi, e_{h}^u)-L^3_{\phi,\xi}(e_h^u)\\
=&L^1_{u,p}(Q_h\phi)-L^2_{u,p}(Q_h\xi)+s_2(e_h^p, Q_h\xi)+(e_h^p, \nabla_w\cdot Q_h\phi)-L^3_{\phi,\xi}(e_h^u)\\
=&L^1_{u,p}(Q_h\phi)-L^2_{u,p}(Q_h\xi)+s_2(e_h^p, Q_h\xi)+\langle (I-Q_b)\phi, e_h^p\cdot\mathbf{n}\rangle-L^3_{\phi,\xi}(e_h^u),
\end{aligned}
\end{equation}
where we also used the assumption of $n\leq{k+1}$.
By employing $\Vert \nabla_wQ_h\phi \Vert_1\leq C\Vert \phi\Vert_2$, we deduce 
\begin{equation*}
\begin{aligned}
\left|L^1_{u,p}(Q_h\phi)\right|\leq& C(h^{s+1}\Vert\mathbf{u}\Vert_{s+2}+h^{k}\Vert\mathbf{u}\Vert_{k+1}+h^{n+1}\Vert p\Vert_{n+1})h\Vert\phi\Vert_2,\\
\left|L^2_{u,p}(Q_h\xi)\right|\leq&C(h^{j+1}\Vert\mathbf{u}\Vert_{j+1}+h^{n+1}h^{\alpha}\Vert p\Vert_{n+1})\Vert\xi\Vert_1,\\
\left|\langle (I-Q_b)\phi, e_h^p\cdot\mathbf{n}\rangle\right|\leq&Ch\Vert\phi\Vert_2\Vert e_h^p\Vert,\\
\left|s_2(e_h^p, Q_h\xi)\right|\leq&Ch\Vert e_h^p\Vert\Vert \xi\Vert_1.\\
\end{aligned}
\end{equation*}
Similar to the estimate in \eqref{eq:error_term_es_st}, we have 
\begin{equation*}
\begin{aligned}
\left|L^3_{\phi,\xi}(e_h^u)\right|\leq&Ch(\Vert\phi\Vert_2+\Vert \xi\Vert_1)\3bar e_h^u\3bar.\\
\end{aligned}
\end{equation*}
Thus, from the regularity of dual problem, we get
\begin{equation*}
\begin{aligned}
\Vert e_{h0}^u\Vert \leq&C\Big(h^{s+2}\Vert\mathbf{u}\Vert_{s+2}+h^{k+1}\Vert\mathbf{u}\Vert_{k+1}+h^{j+1}\Vert\mathbf{u}\Vert_{j+1}
+h^{n+1}(h+h^{\alpha})\Vert p\Vert_{n+1}\\
&+(h+h^{\alpha})\Vert e_h^p\Vert+h\3bar e_h^u\3bar \Big).\\
\end{aligned}
\end{equation*}
By combining the above inequality with Theorem \ref{th:ste_energy_er}, we obtain the desired error estimate for the velocity approximation in $L^2$ norm.
\end{proof}

\begin{remark}
When $n\leq j$, the parameter $\mu$ can be chosen as $\mu=0$; i.e. without including the stability term $s_2$ in the {\em{g}}WG scheme \eqref{sch:WG_FEM}. It should be pointed out that both the {\em{inf-sup}} condition and the convergence are established by assuming ${k-1}\leq n\leq \max\{m, k+1\}$.
In particular, the velocity estimate in $L^2$ norm holds true with $j\geq0$. A proof for this last claim can be given by following the above process; interested readers are referred to \cite{qi2021} for details.
\end{remark}

\section{{g}WG for Evolutionary Oseen Equation}\label{sec:4}
Now, we return to consider the evolutionary case. For simplicity, we introduce the following bilinear form 
\begin{equation*}
\begin{aligned}
a(\mathbf{v}_h, \chi_h)&=\mu(\nabla_w\mathbf{v}_h, \nabla_w\chi_h)
+\rho((\beta\cdot\nabla_w)\mathbf{v}_h,\chi_0)+s_1(\mathbf{v}_h, \chi_h).
\end{aligned}
\end{equation*}
We further introduce the projection $(E_h\mathbf{v}, E_h^p q)$ defined by the following equations:
\begin{equation}\label{stokes_pro}
\begin{aligned}
a(E_h\mathbf{v},\chi_h)-(E_h^p q,\nabla_w\cdot\chi_h)&=(-\mu\Delta\mathbf{v}+\rho(\beta\cdot\nabla)\mathbf{v}+\nabla q,\chi_0),~~\forall\chi_h\in\mathbf{V}_h^0,\\
(\nabla_w\cdot E_h\mathbf{v}, \psi)+s_2(E_h^p q,\psi)&=(\nabla\cdot\mathbf{v},\psi),~~~~~~~~~~~~~~~~~~~~~~~~~~~~~\forall\psi\in W_h.
\end{aligned}
\end{equation}
It is easy to see that \eqref{stokes_pro} is actually the generalized weak Galerkin scheme for the following problem: Find $(\mathbf{v},q)$ such that 
\begin{equation}\label{stokes_projection_problem}
\begin{aligned}
-\mu\nabla \cdot(\nabla \mathbf{v})+\rho(\beta\cdot\nabla)\mathbf{v}+\nabla q&=\mathbf{f}^{*},~~~~~~~~~~\mbox{in}~\Omega, \\
\nabla\cdot\mathbf{v}&=0,~~~~~~~~~~~\mbox{in}~\Omega,\\
\mathbf{v}&=\mathbf{g}, ~~~~~~~~~~~\mbox{on}~\partial\Omega.
\end{aligned}
\end{equation} 
From Theorems \ref{erres_steady_energy} and \ref{erres_steady_l2}, we have the following results. 
\begin{lemma}\label{lm:err_stokes_pro}
Let $(E_h\mathbf{v}, E_h^p q)$ be defined by \eqref{stokes_pro}.
Then there exists a constant such that
\begin{equation}\label{erres_stokes_pro}
\begin{aligned}
 \3bar Q_h\mathbf{v}-E_h\mathbf{v}\3bar^2&\leq CR_1,\\
 \Vert Q_h^p q-E_h^p q\Vert^2&\leq C(1+h^{\gamma+1}+h^{2(1-\alpha)})R_1,\\
\Vert Q_h\mathbf{v}-E_h\mathbf{v}\Vert ^2&\leq C(h^2+h^{2\alpha})(1+h^{\gamma+1}+h^{2(1-\alpha)})R_1.
\end{aligned}
\end{equation}
\end{lemma}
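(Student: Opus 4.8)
The plan is to recognize that Lemma \ref{lm:err_stokes_pro} is essentially a corollary of Theorems \ref{th:ste_energy_er} and \ref{th:ste_L2_er}, obtained by applying the steady-state convergence theory to the auxiliary problem \eqref{stokes_projection_problem}. First I would observe that, for fixed $t$, the pair $(\mathbf{v},q)$ solving \eqref{stokes_projection_problem} with $\mathbf{f}^{*}:=-\mu\Delta\mathbf{v}+\rho(\beta\cdot\nabla)\mathbf{v}+\nabla q$ and boundary data $\mathbf{g}$ is precisely a weak solution of the steady-state Oseen equation \eqref{weakform} in the sense used there (note $\nabla\cdot\mathbf{v}=0$, so the right-hand side of the second equation in \eqref{stokes_pro} vanishes, matching \eqref{sch:WG_FEM}); and $(E_h\mathbf{v},E_h^pq)$ is by construction exactly its gWG approximation \eqref{sch:WG_FEM}, since \eqref{stokes_pro} and \eqref{sch:WG_FEM} coincide with $\mathbf{u}_h\leftarrow E_h\mathbf{v}$, $p_h\leftarrow E_h^pq$, $\mathbf{f}\leftarrow\mathbf{f}^{*}$. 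Therefore the errors $Q_h\mathbf{v}-E_h\mathbf{v}=e_h^u$ and $Q_h^pq-E_h^pq=e_h^p$ are governed by the error equations \eqref{eq:error_eq_st}.

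Next I would simply invoke Theorem \ref{th:ste_energy_er} to get $\3bar Q_h\mathbf{v}-E_h\mathbf{v}\3bar^2\le CR_1$ and $\Vert Q_h^pq-E_h^pq\Vert^2\le C(1+h^{\gamma+1}+h^{2(1-\alpha)})R_1$, and Theorem \ref{th:ste_L2_er} to get $\Vert Q_h\mathbf{v}-E_h\mathbf{v}\Vert^2\le C(h^2+h^{2\alpha})(1+h^{\gamma+1}+h^{2(1-\alpha)})R_1$, where $R_1$ is the same quantity as in Theorem \ref{th:ste_energy_er} but now built from the regularity norms of $(\mathbf{v},q)$ rather than $(\mathbf{u},p)$. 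The hypotheses transfer verbatim: we need $\mathbf{v}\in[H^{i}(\Omega)]^d$ with $i=\max\{s+2,k+1,j+1\}$, $q\in H^{n+1}(\Omega)$, and $k-1\le n\le\max\{m,k+1\}$ (plus $j\ge1$ for the $L^2$ velocity bound), exactly the standing assumptions carried through Section \ref{sec:4}.

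The only genuine point requiring a remark — and the main obstacle, though a mild one — is that Theorems \ref{th:ste_energy_er} and \ref{th:ste_L2_er} are stated for the \emph{weak solution} of \eqref{weakform} with a given forcing $\mathbf{f}$, whereas here the data $\mathbf{f}^{*}$ is defined \emph{in terms of} the solution $(\mathbf{v},q)$ of \eqref{stokes_projection_problem}. One has to check that \eqref{stokes_projection_problem} is well posed (existence, uniqueness in $L_0^2$ for $q$, and $H^{i}/H^{n+1}$ elliptic regularity) so that $\mathbf{f}^{*}$ makes sense and the regularity norms entering $R_1$ are finite; this is the standard Oseen regularity theory under $\nabla\cdot\beta\le0$ and smoothness of $\beta$, and I would cite it rather than prove it. Once that is in hand, the three estimates in \eqref{erres_stokes_pro} are immediate transcriptions of \eqref{erres_steady_energy} and \eqref{erres_steady_l2}. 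I would close by noting that when $\gamma=-1$ and $\alpha=1$ all the parenthetical factors collapse to constants and $R_1$ reduces to the optimal-order expression $h^{2(s+1)}\Vert\mathbf{v}\Vert_{s+2}^2+h^{2k}\Vert\mathbf{v}\Vert_{k+1}^2+h^{2j}\Vert\mathbf{v}\Vert_{j+1}^2+h^{2(n+1)}\Vert q\Vert_{n+1}^2$, which will be what is actually used in the time-dependent error analysis that follows.
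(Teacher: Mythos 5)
Your proposal matches the paper's treatment exactly: the paper gives no separate proof of this lemma, simply observing that \eqref{stokes_pro} is the gWG scheme \eqref{sch:WG_FEM} applied to the auxiliary steady problem \eqref{stokes_projection_problem}, so the three bounds are direct transcriptions of Theorems \ref{th:ste_energy_er} and \ref{th:ste_L2_er} with $(\mathbf{v},q)$ in place of $(\mathbf{u},p)$. Your additional remark on well-posedness and regularity of the auxiliary problem is a reasonable point of care that the paper leaves implicit.
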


\subsection{Convergence of Semi-discrete Scheme}
To estimate the convergence of the evolutionary case \eqref{sch:evo_iWG_semi},
we denote the error terms of semi-discrete scheme by $e_h^u=Q_h\mathbf{u}-\mathbf{u}_h=\eta_h^u+\theta_h^u$ and $e_h^p=Q_h^p p-p_h=\eta_h^p+\theta_h^p$, 
where $\eta_h^u=Q_h\mathbf{u}-E_h\mathbf{u}$, $\theta_h^u=E_h\mathbf{u}-\mathbf{u}_h$ and 
$\eta_h^p=Q_h^p p-E_h^p p$, $\theta_h^p=E_h^p p-p_h$.

 From the initial condition in  {g}WG \eqref{sch:evo_iWG_semi}, we deduce $e_h^u(0)=0$.
% so assume the projections satisfy $\eta_h^u(0)=\theta_h^u(0)=0$.

\begin{theorem}\label{th:evo_er_semi}
Let $(\mathbf{u}_h, p_h)$ and $(\mathbf{u}, p)$ be the numerical solution of  {g}WG \eqref{sch:evo_iWG_semi} and exact solution of \eqref{evo_weakform}, respectively.
Assume that the velocity $\mathbf{u}, \mathbf{u}_t,\mathbf{u}_{tt}\in [H^{i}(\Omega)]^d$, $i=\max\{s+2,k+1,j+1\}$ and the pressure $p,p_t,p_{tt}\in H^{n+1}(\Omega)$. Let ${k-1}\leq n\leq \max\{m, k+1\}$ and $j\geq 1$.
Then there exists a constant such that
\begin{equation}\label{erres_evo_oseen_semi}
\begin{aligned}
\Vert e_h^{u}(t)\Vert ^2\leq& \Vert \theta_h^u(0)\Vert^2+C(h^2+h^{2\alpha})(1+h^{\gamma+1}+h^{2(1-\alpha)})\Big(R_1(t)+\int_0^tR_2(\bar{s})d\bar{s}\Big),\\
 \3bar e_h^{u}(t)\3bar^2\leq& \3bar \theta_h^u(0)\3bar^2+CR_1(t)\\
 &+C(h^2+h^{2\alpha})(1+h^{\gamma+1}+h^{2(1-\alpha)})\int_0^t\Big(R_2(\bar{s})+R_3(\bar{s})\Big)d\bar{s},\\
 \Vert e_h^{p}(t)\Vert^2\leq& C(1+h^{\gamma+1}+h^{2(1-\alpha)})\Big(R_1(t)+(1+h^{1+\gamma}+h^{1-\alpha})(h^2+h^{2\alpha})(R_2(t)\\
  &+\int_0^t(R_2(\bar{s})+R_3(\bar{s}))d\bar{s}+\mu\3bar\theta_{h}^u(0)\3bar^2+\frac{\rho}{2}\Vert\theta_{ht}^u(0)\Vert^2\Big).
\end{aligned}
\end{equation}
Here, 
\begin{equation*}
\begin{aligned}
R_1(t)=&h^{2(s+1)}(1+h^{-\gamma-1})\Vert \mathbf{u}(t)\Vert^2_{s+2}+h^{2k}(1+h^{-\gamma-1}+h^{\gamma+1})\Vert \mathbf{u}(t)\Vert^2_{k+1}\\
&+h^{2(n+1)}(1+h^{-\gamma-1}+h^{\alpha-1})\Vert p(t)\Vert^2_{n+1}+h^{2j}h^{1-\alpha}\Vert \mathbf{u}(t)\Vert^2_{j+1},
\end{aligned}
\end{equation*}
and
\begin{equation*}
\begin{aligned}
R_2(\bar{s})=&h^{2(s+1)}(1+h^{-\gamma-1})\Vert \mathbf{u}_t(\bar{s})\Vert^2_{s+2}+h^{2k}(1+h^{-\gamma-1}+h^{\gamma+1})\Vert \mathbf{u}_t(\bar{s})\Vert^2_{k+1}\\
&+h^{2(n+1)}(1+h^{-\gamma-1}+h^{\alpha-1})\Vert p_t(\bar{s})\Vert^2_{n+1}+h^{2j}h^{1-\alpha}\Vert \mathbf{u}_t(\bar{s})\Vert^2_{j+1},\\
R_3(\bar{s})=&h^{2(s+1)}(1+h^{-\gamma-1})\Vert \mathbf{u}_{tt}(\bar{s})\Vert^2_{s+2}+h^{2k}(1+h^{-\gamma-1}+h^{\gamma+1})\Vert \mathbf{u}_{tt}(\bar{s})\Vert^2_{k+1}\\
&+h^{2(n+1)}(1+h^{-\gamma-1}+h^{\alpha-1})\Vert p_{tt}(\bar{s})\Vert^2_{n+1}+h^{2j}h^{1-\alpha}\Vert \mathbf{u}_{tt}(\bar{s})\Vert^2_{j+1}.
\end{aligned}
\end{equation*}
Moreover, when $\gamma=-1$ and $\alpha=1$, optimal order of convergence is obtained.
\end{theorem}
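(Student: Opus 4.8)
The plan is to exploit the elliptic (Stokes-type) projection $E_h$ already introduced, write $e_h^u=\eta_h^u+\theta_h^u$ and $e_h^p=\eta_h^p+\theta_h^p$, control $\eta_h^u,\eta_h^p$ directly by Lemma~\ref{lm:err_stokes_pro}, and devote the real work to energy estimates for $\theta_h^u,\theta_h^p$. First I would derive the error equations for $(\theta_h^u,\theta_h^p)$: take $(\mathbf{v},q)=(\mathbf{u}(t),p(t))$ in the projection identities \eqref{stokes_pro}, use the momentum equation of \eqref{evo_oseen_problem} in the form $-\mu\Delta\mathbf{u}+\rho(\beta\cdot\nabla)\mathbf{u}+\nabla p=\mathbf{f}-\rho\mathbf{u}_t$, subtract from the semi-discrete scheme \eqref{sch:evo_iWG_semi}, and use that $\partial_t E_h\mathbf{u}=E_h\mathbf{u}_t$ (since $\beta$ is time independent and the projection is linear) together with $((I-Q_0)\mathbf{u}_t,\chi_0)=0$; this yields
\begin{equation*}
\rho(\theta_{ht}^u,\chi_0)+a(\theta_h^u,\chi_h)-(\theta_h^p,\nabla_w\cdot\chi_h)=-\rho(\eta_{ht}^u,\chi_0),\qquad (\nabla_w\cdot\theta_h^u,\psi)+s_2(\theta_h^p,\psi)=0,
\end{equation*}
for all $\chi_h\in\mathbf{V}_h^0$ and $\psi\in W_h$, where $\eta_{ht}^u=\partial_t(Q_h\mathbf{u}-E_h\mathbf{u})=Q_h\mathbf{u}_t-E_h\mathbf{u}_t$; and since $e_h^u(0)=0$ one has $\theta_h^u(0)=-\eta_h^u(0)$.

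For the $L^2$ velocity bound I would take $\chi_h=\theta_h^u$ and $\psi=\theta_h^p$, add the two error equations so that the $(\theta_h^p,\nabla_w\cdot\theta_h^u)$ terms cancel against $s_2(\theta_h^p,\theta_h^p)\ge0$, apply the coercivity estimate \eqref{eq:coercive} to $a(\theta_h^u,\theta_h^u)$, and bound the right-hand side $-\rho(\eta_{ht}^u,\theta_h^u)$ by Young's inequality; this yields $\tfrac{d}{dt}\|\theta_h^u\|^2\le C\|\eta_{ht}^u\|^2+C\|\theta_h^u\|^2$ together with control of $\int_0^t(\|\nabla_w\theta_h^u\|^2+s_1(\theta_h^u,\theta_h^u)+s_2(\theta_h^p,\theta_h^p))\,d\bar{s}$. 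Gronwall's lemma, the bound $\|\eta_{ht}^u\|^2\le C(h^2+h^{2\alpha})(1+h^{\gamma+1}+h^{2(1-\alpha)})R_2$ obtained by applying Lemma~\ref{lm:err_stokes_pro} to $(\mathbf{u}_t,p_t)$, and the triangle inequality with $\|\eta_h^u(t)\|^2\le C(h^2+h^{2\alpha})(1+h^{\gamma+1}+h^{2(1-\alpha)})R_1(t)$ then give the first line of \eqref{erres_evo_oseen_semi}.

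For the energy-norm estimate I would instead test with $\chi_h=\theta_{ht}^u\in\mathbf{V}_h^0$ and use the time-differentiated divergence equation with $\psi=\theta_h^p$; the symmetric pieces become $\tfrac12\tfrac{d}{dt}$ of $\mu\|\nabla_w\theta_h^u\|^2$, $s_1(\theta_h^u,\theta_h^u)$ and $s_2(\theta_h^p,\theta_h^p)$, the non-symmetric term $\rho((\beta\cdot\nabla_w)\theta_h^u,\theta_{ht}^u)$ is handled via the identity $((\beta\cdot\nabla_w)\mathbf{w}_h,\mathbf{z}_0)=(\nabla_w\mathbf{w}_h,\beta\mathbf{z}_0^t)$ from the proof of Lemma~\ref{lem:coercive} and absorbed into $\|\theta_{ht}^u\|^2+\|\nabla_w\theta_h^u\|^2$, and — the decisive point — the source term $-\rho(\eta_{ht}^u,\theta_{ht}^u)$ is \emph{integrated by parts in time} rather than estimated pointwise, trading $\theta_{ht}^u$ for $\theta_h^u$ at the cost of introducing $\eta_{htt}^u$, which is precisely where $R_3$ (the $\mathbf{u}_{tt},p_{tt}$ norms) enters through Lemma~\ref{lm:err_stokes_pro} applied to $(\mathbf{u}_{tt},p_{tt})$. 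Integrating in $\bar{s}$, feeding in the $\int_0^t\|\nabla_w\theta_h^u\|^2$ bound from the previous step and applying Gronwall gives pointwise control of $\3bar\theta_h^u(t)\3bar^2$, hence the second line of \eqref{erres_evo_oseen_semi}; a parallel computation — differentiating the whole error system once in time and testing with $\theta_{ht}^u$ — produces the auxiliary bound $\|\theta_{ht}^u(t)\|^2\le C\|\theta_{ht}^u(0)\|^2+C(h^2+h^{2\alpha})(1+h^{\gamma+1}+h^{2(1-\alpha)})\int_0^tR_3\,d\bar{s}$, which feeds the pressure estimate and explains the presence of $\tfrac{\rho}{2}\|\theta_{ht}^u(0)\|^2$ and $\mu\3bar\theta_h^u(0)\3bar^2$ in the last line of \eqref{erres_evo_oseen_semi}.

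Finally, for the pressure I would apply the generalized inf-sup condition (Lemma~\ref{lem:inf-sup}) to $q_h=\theta_h^p$, obtaining $\mathbf{v}_h\in\mathbf{V}_h^0$ with $b(\mathbf{v}_h,\theta_h^p)\ge\tfrac12\|\theta_h^p\|^2-Ch^{1-\alpha}s_2(\theta_h^p,\theta_h^p)$ and $\3bar\mathbf{v}_h\3bar\le C(1+h^{(1+\gamma)/2})\|\theta_h^p\|$; then $b(\mathbf{v}_h,\theta_h^p)=(\theta_h^p,\nabla_w\cdot\mathbf{v}_h)$ is rewritten, using the momentum error equation, as $\rho(\theta_{ht}^u,\mathbf{v}_0)+a(\theta_h^u,\mathbf{v}_h)+\rho(\eta_{ht}^u,\mathbf{v}_0)$, each piece is bounded by $(\|\theta_{ht}^u\|+\3bar\theta_h^u\3bar+\|\eta_{ht}^u\|)\3bar\mathbf{v}_h\3bar$, one power of $\|\theta_h^p\|$ is cancelled, the result is squared, and the velocity and auxiliary estimates together with Lemma~\ref{lm:err_stokes_pro} are substituted; adding $\|\eta_h^p\|^2\le C(1+h^{\gamma+1}+h^{2(1-\alpha)})R_1(t)$ closes the third line, and setting $\gamma=-1$, $\alpha=1$ removes all negative powers of $h$ to produce the optimal orders. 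I expect the main obstacle to be the energy-norm step: making the testing with $\theta_{ht}^u$ legitimate (this is what forces the time-regularity hypotheses on $\mathbf{u}$ and $p$), carrying out the integration by parts in time so that $\eta_{ht}^u$ stays on a controllable side, and threading together the coupled Gronwall arguments for $\|\theta_h^u\|$, $\3bar\theta_h^u\3bar$ and $\|\theta_{ht}^u\|$ without circularity, all while tracking the numerous $h^{\pm\gamma},h^{\pm\alpha}$ weights so that they reassemble into $R_1$, $R_2$, $R_3$.
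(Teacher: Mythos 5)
Your proposal is correct and follows essentially the same route as the paper: the elliptic-projection splitting $e_h=\eta_h+\theta_h$ with Lemma~\ref{lm:err_stokes_pro} for $\eta_h$, the error equations \eqref{eq:semi_er_term}, testing with $\theta_h^u$ plus coercivity for the $L^2$ bound, a time-differentiated system tested with $\theta_{ht}^u$ to bring in $\eta_{htt}^u$ (hence $R_3$), and the generalized \emph{inf-sup} condition for the pressure. The only deviations are cosmetic — you test the undifferentiated equation with $\theta_{ht}^u$ and integrate by parts in time where the paper instead combines the two integrated bounds \eqref{eq:semi_er_term2}--\eqref{eq:semi_er_term3} through a product-rule identity, and you invoke Gronwall where the paper absorbs the lower-order term via the $C_0\rho\Vert\theta_h^u\Vert^2$ contribution of the coercivity estimate.
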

\begin{proof}
Recall that we have obtained the estimate for $\eta_h^u$ and $\eta_h^p$ in Lemma \ref{lm:err_stokes_pro}. It remains to bound $\theta_h^u$ and $\theta_h^p$. To this end, note that for $\chi_h\in\mathbf{V}_h^0$ and $\psi\in W_h$, we have
\begin{equation}\label{eq:semi_er_term}
\begin{aligned}
\rho(\theta_{ht}^u,\chi_0)+a(\theta_h^u,\chi_h)-(\theta_h^p,\nabla_w\cdot\chi_h)&=-\rho(\eta_{ht}^u, \chi_0),\\
(\nabla_w\cdot\theta_h^u,\psi)+s_2(\theta_h^p,\psi)&=0.
\end{aligned}
\end{equation}
By choosing $\chi_h=\theta_h^u$ and $\psi=\theta_h^p$ in \eqref{eq:semi_er_term}, we have
\begin{equation*}
\begin{aligned}
&\frac{\rho}{2}\frac{d}{dt}\Vert\theta_{h}^u\Vert^2+\mu\Vert\nabla_w\theta_h^u\Vert^2+\rho((\beta\cdot\nabla_w)\theta_{h}^u, \theta_{h}^u)
+s_1(\theta_h^u,\theta_h^u)+s_2(\theta_h^p,\theta_h^p)\\
=&-\rho(\eta_{ht}^u, \theta_{h}^u),
\end{aligned}
\end{equation*}
Using the Cauchy-Schwarz inequality, Lemma \ref{lem:coercive}, and integrating over $(0,t)$, we arrive at
\begin{equation}\label{eq:semi_er_term2}
\begin{aligned}
&\frac{\rho}{2}\Vert\theta_{h}^u(t)\Vert^2+\int_0^t\Big(C_1\Vert\nabla_w\theta_h^u\Vert^2+{C_0\rho}\Vert\theta_{h}^u\Vert^2+\frac{1}{2}s_1(\theta_h^u,\theta_h^u)+s_2(\theta_h^p,\theta_h^p)\Big)d\bar{s}\\
\leq& \frac{\rho}{2}\Vert\theta_{h}^u(0)\Vert^2+C\int_0^t\Vert \eta_{ht}^u\Vert^2 d\bar{s}.
\end{aligned}
\end{equation}
The error estimate for velocity in $L^2$ norm is obtained at once from \eqref{eq:semi_er_term2} and  Theorem \ref{th:evo_er_semi},
 \begin{equation*}
\begin{aligned}
\Vert e_{h}^u(t)\Vert^2&\leq \Vert \eta_{h}^u(t)\Vert^2+\Vert \theta_{h}^u(t)\Vert^2\\
&\leq \Vert\theta_{h}^u(0)\Vert^2+ \Vert \eta_{h}^u(t)\Vert^2+C\int_0^t\Vert \eta_{ht}^u\Vert^2 d\bar{s}.
\end{aligned}
\end{equation*}

To estimate the velocity in energy norm, we have the following error equation by differentiating the equation \eqref{eq:semi_er_term}  
\begin{equation*}
\begin{aligned}
\rho(\theta_{htt}^u,\chi_0)+a(\theta_{ht}^u,\chi_h)-(\theta_{ht}^p,\nabla_w\cdot\chi_h)&=-\rho(\eta_{htt}^u, \chi_0),\\
(\nabla_w\cdot\theta_{ht}^u,\psi)+s_2(\theta_{ht}^p,\psi)&=0.
\end{aligned}
\end{equation*}
Taking $\chi_h=\theta_{ht}^u$ and $\psi=\theta_{ht}^p$ in the above equation yields
\begin{equation*}
\begin{aligned}
&\frac{\rho}{2}\frac{d}{dt}\Vert\theta_{ht}^u\Vert^2+\mu\Vert\nabla_w\theta_{ht}^u\Vert^2+\rho((\beta\cdot\nabla_w)\theta_{ht}^u, \theta_{ht}^u)+s_1(\theta_{ht}^u,\theta_{ht}^u)+s_2(\theta_{ht}^p,\theta_{ht}^p)\\
=&-\rho(\eta_{htt}^u, \theta_{ht}^u).
\end{aligned}
\end{equation*}
By integrating over $(0, t)$, we have
\begin{equation}\label{eq:semi_er_term3}
\begin{aligned}
&\frac{\rho}{2}\Vert\theta_{ht}^u(t)\Vert^2+\int_0^t\Big(C_1\Vert\nabla_w\theta_{ht}^u\Vert^2+{C_0\rho}\Vert\theta_{ht}^u\Vert^2+\frac{1}{2}s_1(\theta_{ht}^u,\theta_{ht}^u)+s_2(\theta_{ht}^p,\theta_{ht}^p)\Big)d\bar{s}\\
\leq& \frac{\rho}{2}\Vert\theta_{ht}^u(0)\Vert^2+C\int_0^t\Vert \eta_{htt}^u\Vert^2 d\bar{s}.
\end{aligned}
\end{equation}
Note the following equation
 \begin{equation*}
\begin{aligned}
&\frac{1}{2}\frac{d}{dt}\Big(\mu\Vert\nabla_w\theta_{h}^u\Vert^2+s_1(\theta_{h}^u,\theta_{h}^u)+s_2(\theta_{h}^p,\theta_{h}^p) \Big)\\
=&(\mu\nabla_w\theta_{ht}^u, \nabla_w\theta_{h}^u)+s_1(\theta_{ht}^u,\theta_{h}^u)+s_2(\theta_{ht}^p,\theta_{h}^p).
\end{aligned}
\end{equation*}
Then, it follows from \eqref{eq:semi_er_term2} and \eqref{eq:semi_er_term3} that
 \begin{equation}\label{eq:semi_er_term4}
\begin{aligned}
&\mu\Vert\nabla_w\theta_{h}^u(t)\Vert^2+s_1(\theta_{h}^u(t),\theta_{h}^u(t))+s_2(\theta_{h}^p(t),\theta_{h}^p(t))\\
\leq&\mu\Vert\nabla_w\theta_{h}^u(0)\Vert^2+s_1(\theta_{h}^u(0),\theta_{h}^u(0))+s_2(\theta_{h}^p(0),\theta_{h}^p(0))
+\int_0^t\Big(\mu\Vert\nabla_w\theta_{ht}^u\Vert^2\\
&+s_1(\theta_{ht}^u,\theta_{ht}^u)+s_2(\theta_{ht}^p,\theta_{ht}^p) \Big)d\bar{s}
+\int_0^t\Big(\mu\Vert\nabla_w\theta_{h}^u\Vert^2+s_1(\theta_{h}^u,\theta_{h}^u)+s_2(\theta_{h}^p,\theta_{h}^p) \Big)d\bar{s}\\
\leq&\mu\Vert\nabla_w\theta_{h}^u(0)\Vert^2+s_1(\theta_{h}^u(0),\theta_{h}^u(0))+s_2(\theta_{h}^p(0),\theta_{h}^p(0))\\
&+{\rho}\Vert\theta_{h}^u(0)\Vert^2+C\int_0^t\Vert \eta_{ht}^u\Vert^2 d\bar{s}+
{\rho}\Vert\theta_{ht}^u(0)\Vert^2+C\int_0^t\Vert \eta_{htt}^u\Vert^2 d\bar{s},
\end{aligned}
\end{equation}
which, together with \eqref{eq:semi_er_term4}, leads to the error estimate for the velocity in energy norm:
  \begin{equation*}
\begin{aligned}
\3bar e_{h}^u(t)\3bar^2\leq& \3bar \eta_{h}^u(t)\3bar^2+\3bar \theta_{h}^u(t)\3bar^2\\
\leq& \3bar \eta_{h}^u(t)\3bar^2+\3bar \theta_{h}^u(0)\3bar^2\\
&+{\rho}\Vert\theta_{h}^u(0)\Vert^2+C\int_0^t\Vert \eta_{ht}^u\Vert^2 d\bar{s}+
{\rho}\Vert\theta_{ht}^u(0)\Vert^2+C\int_0^t\Vert \eta_{htt}^u\Vert^2 d\bar{s}.
\end{aligned}
\end{equation*}
In the following, we shall estimate the pressure in $L^2$ by using the {\em{inf-sup}} condition in Lemma \ref{lem:inf-sup}  and the equations \eqref{eq:semi_er_term}:
 \begin{equation*}
\begin{aligned}
&\Vert \theta_h^p(t)\Vert^2\leq 2b(\mathbf{v}_h, \theta_h^p(t))+Ch^{1-\alpha}s_2(\theta_h^p(t),\theta_h^p(t))\\
=&2\rho(\theta_{ht}^u(t),\mathbf{v}_0)+2a(\theta_h^u(t),\mathbf{v}_h)-2\rho(\eta_{ht}^u(t), \mathbf{v}_0)+Ch^{1-\alpha}s_2(\theta_h^p(t),\theta_h^p(t))\\
\leq&C\Big( \Vert\theta_{ht}^u(t)\Vert+\Vert\eta_{ht}^u(t)\Vert+\3bar\theta_{h}^u(t)\3bar\Big)\3bar\mathbf{v}_h\3bar
+Ch^{1-\alpha}s_2(\theta_h^p(t),\theta_h^p(t))\\
\leq&C\Big( \Vert\theta_{ht}^u(t)\Vert+\Vert\eta_{ht}^u(t)\Vert+\3bar\theta_{h}^u(t)\3bar\Big)(1+h^{\frac{1-\gamma}{2}})\Vert \theta_h^p(t)\Vert
+Ch^{1-\alpha}s_2(\theta_h^p(t),\theta_h^p(t)).
\end{aligned}
\end{equation*}
Here we have used the fact $\Vert\mathbf{v}_0\Vert\leq C\3bar\mathbf{v}_h\3bar$.
Hence, from the Cauchy-Schwarz inequality and \eqref{eq:semi_er_term3}, \eqref{eq:semi_er_term2}, \eqref{eq:semi_er_term4} we have
 \begin{equation}\label{eq:semi_er_term5}
\begin{aligned}
&\Vert \theta_h^p(t)\Vert^2
\leq C(1+h^{1+\gamma})\Big( \Vert\theta_{ht}^u(t)\Vert^2+\Vert\eta_{ht}^u(t)\Vert^2+\3bar\theta_{h}^u(t)\3bar^2\Big)
+Ch^{1-\alpha}s_2(\theta_h^p(t),\theta_h^p(t))\\
\leq&C(1+h^{1+\gamma}+h^{1-\alpha})\Big( \int_0^t(\Vert\eta_{htt}^u\Vert^2+\Vert\eta_{ht}^u\Vert^2)d\bar{s}+\Vert\eta_{ht}^u(t)\Vert^2\\
&+\mu\3bar\theta_{h}^u(0)\3bar^2+\frac{\rho}{2}\Vert\theta_{ht}^u(0)\Vert^2\Big).
\end{aligned}
\end{equation}
Similar to the estimate for the velocity, we have 
 \begin{equation*}
\begin{aligned}
&\Vert e_h^p(t)\Vert^2\leq \Vert \eta_h^p(t)\Vert^2+\Vert \theta_h^p(t)\Vert^2,
\end{aligned}
\end{equation*}
which, together with the estimates in Theorem \ref{th:evo_er_semi}, completes the convergence proof for the semi-discrete scheme.
\end{proof}

\subsection{Convergence of Fully-discrete Scheme}
Now, we turn our attention to the fully-discrete scheme \eqref{sch:evo_iWG_fully}. The error term is denoted by
 $$e_u^{\bar{n}}=Q_h\mathbf{u}(t^{\bar{n}})-\mathbf{u}^{\bar{n}}=\eta_u^{\bar{n}}+\theta_u^{\bar{n}},$$
 where $\eta_u^{\bar{n}}=Q_h\mathbf{u}(t^{\bar{n}})-E_h\mathbf{u}(t^{\bar{n}})$ and $\theta_u^{\bar{n}}=E_h\mathbf{u}(t^{\bar{n}})-\mathbf{u}^{\bar{n}}$.
 Similarly, we write $e_p^{\bar{n}}=Q_h^p p(t^{\bar{n}})-p^{\bar{n}}=\eta_p^{\bar{n}}+\theta_p^{\bar{n}}$, and $\eta_p^{\bar{n}}=Q_h^p p(t^{\bar{n}})-E_h^p p(t^{\bar{n}})$, $\theta_p^{\bar{n}}=E_h^p p(t^{\bar{n}})-p^{\bar{n}}$.
 
\begin{theorem}\label{th:evo_er_fully}
Let $(\mathbf{u}^{{\bar{n}}+1}, p^{{\bar{n}}+1})$ and $(\mathbf{u}, p)$ be the numerical solution of  {g}WG \eqref{sch:evo_iWG_fully} and exact solution of \eqref{evo_weakform},
respectively.
Assume that the velocity $\mathbf{u}, \mathbf{u}_t,\mathbf{u}_{tt}\in [H^{i}(\Omega)]^d$, $i=\max\{s+2,k+1,j+1\}$ and the pressure $p,p_t,p_{tt}\in H^{n+1}(\Omega)$. Let ${k-1}\leq n\leq \max\{m, k+1\}$ and $j\geq 1$. 
Then there exists a constant such that
\begin{equation}\label{erres_evo_oseen_fully}
\begin{aligned}
\Vert e_{u}^{N+1}\Vert ^2&\leq \Vert \theta_u^0\Vert^2+C\tau^2\int_{t^0}^{t^{N+1}}\Vert \mathbf{u}_{tt}\Vert^2 d\bar{s}
+C(h^2+h^{2\alpha})(1+h^{\gamma+1}+h^{2(1-\alpha)})\\
&\Big(R_1(t^{N+1})+\int_{t^0}^{t^{N+1}}R_2(s)d\bar{s}\Big),\\
 \3bar e_u^{N+1} \3bar^2&\leq  \mu\Vert\nabla_w\theta_u^{0}\Vert^2+s_1(\theta_u^{0}, \theta_u^{0})+s_2(\theta_p^{0}, \theta_p^{0})+\rho\Vert \bar{\partial}_{t}\theta_u^{1}\Vert^2+{\rho}\Vert \theta_u^{0}\Vert^2+CR_1(t^{N+1})\\
&+C(h^2+h^{2\alpha})(1+h^{\gamma+1}+h^{2(1-\alpha)})\int_{t^{0}}^{t^{N+1}}R_2(\bar{s})+R_3(\bar{s})d\bar{s}\\
&+C\tau^2\int_{t^{0}}^{t^{N+1}}\Vert\mathbf{u}_{tt} \Vert^2+\Vert\mathbf{u}_{ttt} \Vert^2d\bar{s},
\end{aligned}
\end{equation}
and 
\begin{equation}\label{erres_evo_oseen_fully2}
\begin{aligned}
&\Vert e_{p}^{N+1}\Vert ^2\leq C(1+h^{1+\gamma}+h^{1-\alpha}+h^{2(1-\alpha)})\Big(\3bar\theta_u^{0}\3bar^2+s_2(\theta_p^{0}, \theta_p^{0})+\rho\Vert \bar{\partial}_{t}\theta_u^{1}\Vert^2+{\rho}\Vert \theta_u^{0}\Vert^2\\
&+R_1(t^{N+1})+\int_{t^{0}}^{t^{N+1}}R_2(\bar{s})+R_3(\bar{s})d\bar{s}
+\tau^2\int_{t^{0}}^{t^{N+1}}\Vert\mathbf{u}_{tt} \Vert^2+\Vert\mathbf{u}_{ttt} \Vert^2d\bar{s}\\
&+\frac{1}{\tau'}\int _{t^{N}}^{t^{N+1}}R_2(\bar{s})d\bar{s}+\tau'\int _{t^{N}}^{t^{N+1}}\Vert\mathbf{u}_{tt}\Vert^2d\bar{s}\Big),
\end{aligned}
\end{equation}
where $\tau'=t^{N+1}-t^N$.

Moreover, when $\gamma=-1$ and $\alpha=1$, optimal order of convergence is obtained.
\end{theorem}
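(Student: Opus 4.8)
The plan is to follow the semi-discrete argument of Theorem~\ref{th:evo_er_semi}, replacing the time derivative by the backward difference quotient and absorbing the extra consistency errors. Split $e_u^{\bar n+1}=\eta_u^{\bar n+1}+\theta_u^{\bar n+1}$ and $e_p^{\bar n+1}=\eta_p^{\bar n+1}+\theta_p^{\bar n+1}$ as in the statement. Since the elliptic projection \eqref{stokes_pro} is linear, $\eta_{u,t}=Q_h\mathbf u_t-E_h\mathbf u_t$ and $\eta_{u,tt}=Q_h\mathbf u_{tt}-E_h\mathbf u_{tt}$, so Lemma~\ref{lm:err_stokes_pro} gives $\|\eta_{u,t}\|^2\le CR_2$, $\|\eta_{u,tt}\|^2\le CR_3$ and $\|\bar{\partial}_t\eta_u^{\bar n+1}\|^2\le C\tau^{-1}\int_{t^{\bar n}}^{t^{\bar n+1}}R_2(\bar s)\,d\bar s$; it therefore remains to bound $\theta_u^{\bar n+1}$ and $\theta_p^{\bar n+1}$. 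Subtracting \eqref{sch:evo_iWG_fully} from \eqref{stokes_pro} taken at $t^{\bar n+1}$, using \eqref{evo_weakform} and the orthogonality $(Q_0\mathbf u_t-\mathbf u_t,\chi_0)=0$, yields the error system
\begin{equation*}
\begin{aligned}
\rho(\bar{\partial}_t\theta_u^{\bar n+1},\chi_0)+a(\theta_u^{\bar n+1},\chi_h)-(\theta_p^{\bar n+1},\nabla_w\cdot\chi_h)&=-\rho(\bar{\partial}_t\eta_u^{\bar n+1},\chi_0)+\rho(r^{\bar n+1},\chi_0),\\
(\nabla_w\cdot\theta_u^{\bar n+1},\psi)+s_2(\theta_p^{\bar n+1},\psi)&=0,
\end{aligned}
\end{equation*}
for all $\chi_h\in\mathbf V_h^0$ and $\psi\in W_h$, where $r^{\bar n+1}=Q_0\big(\bar{\partial}_t\mathbf u(t^{\bar n+1})-\mathbf u_t(t^{\bar n+1})\big)$ and Taylor's formula with integral remainder gives $\|r^{\bar n+1}\|^2\le C\tau\int_{t^{\bar n}}^{t^{\bar n+1}}\|\mathbf u_{tt}\|^2\,d\bar s$.

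For the $L^2$ velocity estimate I would test with $\chi_h=\theta_u^{\bar n+1}$ and $\psi=\theta_p^{\bar n+1}$, use $(\bar{\partial}_t\theta_u^{\bar n+1},\theta_u^{\bar n+1})\ge\frac{1}{2\tau}(\|\theta_u^{\bar n+1}\|^2-\|\theta_u^{\bar n}\|^2)$ and the coercivity \eqref{eq:coercive} of $a(\cdot,\cdot)$ (so the convection term is absorbed and the coefficient of $\|\nabla_w\theta_u^{\bar n+1}\|^2$ remains positive for small $h$), bound the right-hand side by Cauchy--Schwarz, multiply by $\tau$, sum over $\bar n=0,\dots,N$ and close with a discrete Gronwall inequality. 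The truncation terms then telescope into $C\tau^2\int_{t^0}^{t^{N+1}}\|\mathbf u_{tt}\|^2\,d\bar s$ and the $\eta$-terms into $C\int_{t^0}^{t^{N+1}}R_2(\bar s)\,d\bar s$; adding $\|\eta_u^{N+1}\|^2$ from Lemma~\ref{lm:err_stokes_pro} gives the first line of \eqref{erres_evo_oseen_fully}.

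For the energy-norm velocity estimate I would reproduce the \emph{differentiate-in-time} step of Theorem~\ref{th:evo_er_semi} at the discrete level: take the difference of the error system at levels $\bar n+1$ and $\bar n$, divide by $\tau$, and test with $\chi_h=\bar{\partial}_t\theta_u^{\bar n+1}$ and $\psi=\bar{\partial}_t\theta_p^{\bar n+1}$. The resulting second backward difference satisfies $(\bar{\partial}_t\bar{\partial}_t\theta_u^{\bar n+1},\bar{\partial}_t\theta_u^{\bar n+1})\ge\frac{1}{2\tau}(\|\bar{\partial}_t\theta_u^{\bar n+1}\|^2-\|\bar{\partial}_t\theta_u^{\bar n}\|^2)$, and the new consistency term $\bar{\partial}_t r^{\bar n+1}$ contributes $C\tau^2\int\|\mathbf u_{ttt}\|^2$ after summation; the initial quantities $\rho\|\bar{\partial}_t\theta_u^1\|^2$ and $\rho\|\theta_u^0\|^2$ appear here as the discrete analogues of $\rho\|\theta_{ht}^u(0)\|^2$ and $\rho\|\theta_h^u(0)\|^2$. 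Combining the resulting bound on $\tau\sum(\mu\|\nabla_w\bar{\partial}_t\theta_u^{\bar n+1}\|^2+s_1(\bar{\partial}_t\theta_u^{\bar n+1},\bar{\partial}_t\theta_u^{\bar n+1})+s_2(\bar{\partial}_t\theta_p^{\bar n+1},\bar{\partial}_t\theta_p^{\bar n+1}))$ with the discrete product identity
$\frac{1}{2}\bar{\partial}_t\big(\mu\|\nabla_w\theta_u^{\bar n+1}\|^2+s_1(\theta_u^{\bar n+1},\theta_u^{\bar n+1})+s_2(\theta_p^{\bar n+1},\theta_p^{\bar n+1})\big)\le\mu(\nabla_w\bar{\partial}_t\theta_u^{\bar n+1},\nabla_w\theta_u^{\bar n+1})+s_1(\bar{\partial}_t\theta_u^{\bar n+1},\theta_u^{\bar n+1})+s_2(\bar{\partial}_t\theta_p^{\bar n+1},\theta_p^{\bar n+1})$,
multiplying by $\tau$ and telescoping exactly as in \eqref{eq:semi_er_term4}, yields the bound on $\3bar\theta_u^{N+1}\3bar^2$ and hence the second line of \eqref{erres_evo_oseen_fully}. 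For the pressure I would invoke Lemma~\ref{lem:inf-sup}: pick $\mathbf v_h\in\mathbf V_h^0$ with $b(\mathbf v_h,\theta_p^{N+1})\ge\frac{1}{2}\|\theta_p^{N+1}\|^2-Ch^{1-\alpha}s_2(\theta_p^{N+1},\theta_p^{N+1})$ and $\3bar\mathbf v_h\3bar\le C(1+h^{(1+\gamma)/2})\|\theta_p^{N+1}\|$, substitute it into the first error equation at level $N+1$, and estimate the right-hand side through $\|\bar{\partial}_t\theta_u^{N+1}\|$ and $\3bar\theta_u^{N+1}\3bar$ (already controlled), $\|r^{N+1}\|^2\le C\tau'\int_{t^N}^{t^{N+1}}\|\mathbf u_{tt}\|^2$, and $\|\bar{\partial}_t\eta_u^{N+1}\|^2\le C(\tau')^{-1}\int_{t^N}^{t^{N+1}}R_2$; adding $\|\eta_p^{N+1}\|^2$ then produces \eqref{erres_evo_oseen_fully2} with its two $\tau'$-terms.

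I expect the main obstacle to be the energy-norm estimate: performing the second-difference test cleanly, keeping precise track of all initial-level quantities, and making the discrete product identities for $s_1$ and $s_2$ interlock with the time-integrated $L^2$ bound in the same way \eqref{eq:semi_er_term2}--\eqref{eq:semi_er_term4} do in the continuous setting; the bookkeeping of the two powers of $\tau$ (one from the difference quotient, one from the outer time sum) needed to land on $\tau^2$ is where constants and signs are easiest to mishandle. The optimality claim for $\gamma=-1$ and $\alpha=1$ then follows by substituting these values into $R_1,R_2,R_3$ and the estimates of Lemma~\ref{lm:err_stokes_pro}, exactly as in Theorems~\ref{th:ste_energy_er} and \ref{th:ste_L2_er}.
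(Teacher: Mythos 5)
Your proposal is correct and follows essentially the same route as the paper: the same elliptic-projection splitting, the same discrete error system tested first with $\theta_u^{\bar n+1}$ and then (after differencing in time) with $\bar{\partial}_t\theta_u^{\bar n+1}$, the same discrete product identity to recover the energy norm, and the same \emph{inf-sup} argument for the pressure with the two $\tau'$-terms. The only cosmetic deviation is your mention of a discrete Gronwall step in the $L^2$ bound, which the paper avoids by absorbing the $\|\theta_u^{\bar n+1}\|^2$ contribution directly through the coercivity term $C_0\rho\Vert\theta_u^{\bar n+1}\Vert^2$.
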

\begin{proof}
Observe that the estimate for $\eta_u^{{\bar{n}}+1}$ and $\eta_p^{{\bar{n}}+1}$ has been established in Lemma \ref{lm:err_stokes_pro}. Thus, it suffices to handle $\theta_u^{{\bar{n}}+1}$ and $\theta_p^{{\bar{n}}+1}$. Note the following equations for all $\chi_h\in\mathbf{V}_h^0$ and $\psi\in W_h$
 \begin{equation}\label{eq:fully_er_term}
\begin{aligned}
&\rho(\bar{\partial}_t\theta_u^{{\bar{n}}+1}, \chi_0)+a(\theta_u^{{\bar{n}}+1}, \chi_h)-(\theta_p^{{\bar{n}}+1},\nabla_w\cdot\chi_h)\\
=&\rho(\bar{\partial}_t\eta_u^{{\bar{n}}+1}, \chi_0)+\rho(\bar{\partial}_t\mathbf{u}(t^{{\bar{n}}+1})-\mathbf{u}_t(t^{{\bar{n}}+1}), \chi_0),\\
&(\nabla_w\cdot\theta_u^{{\bar{n}}+1},\psi)+s_2(\theta_p^{{\bar{n}}+1}, \psi)=0.
\end{aligned}
\end{equation}
By choosing $\chi_h=\theta_u^{{\bar{n}}+1}$ and $\psi=\theta_p^{{\bar{n}}+1}$ in \eqref{eq:fully_er_term} and using Lemma \ref{lem:coercive}, we obtain
 \begin{equation*}
\begin{aligned}
&\frac{\rho}{2\tau}(\Vert \theta_u^{{\bar{n}}+1}\Vert^2-\Vert \theta_u^{\bar{n}}\Vert^2)+C_1\Vert\nabla_w\theta_u^{{\bar{n}}+1}\Vert^2+{C_0\rho}\Vert \theta_u^{{\bar{n}}+1}\Vert^2\\
&+\frac{1}{2}s_1(\theta_u^{{\bar{n}}+1}, \theta_u^{{\bar{n}}+1})+s_2(\theta_p^{{\bar{n}}+1}, \theta_p^{{\bar{n}}+1})\\
\leq&C\Vert\bar{\partial}_t\eta_u^{{\bar{n}}+1}\Vert^2+C\Vert\bar{\partial}_t\mathbf{u}(t^{{\bar{n}}+1})-\mathbf{u}_t(t^{{\bar{n}}+1})\Vert^2\\
\leq&C\Vert\frac{1}{\tau}\int_{t^{\bar{n}}}^{t^{{\bar{n}}+1}}\eta_{ht}^ud\bar{s}\Vert^2+C\Vert\frac{1}{\tau}\int_{t^{\bar{n}}}^{t^{{\bar{n}}+1}}(t^{\bar{n}}-\bar{s})\mathbf{u}_{tt}d\bar{s} \Vert^2\\
\leq&C\frac{1}{\tau}\int_{t^{\bar{n}}}^{t^{{\bar{n}}+1}}\Vert\eta_{ht}^u\Vert^2d\bar{s}+C\tau\int_{t^{\bar{n}}}^{t^{{\bar{n}}+1}}\Vert\mathbf{u}_{tt} \Vert^2d\bar{s}.
\end{aligned}
\end{equation*}
Thus, summing the above inequality from $0$ to $N$ yields 
 \begin{equation}\label{eq:fully_er_term2}
\begin{aligned}
&{\rho}\Vert \theta_u^{N+1}\Vert^2+\tau\sum_{{\bar{n}}=0}^{N}\Big(2C_1\Vert\nabla_w\theta_u^{{\bar{n}}+1}\Vert^2+{2C_0\rho}\Vert \theta_u^{{\bar{n}}+1}\Vert^2
+s_1(\theta_u^{{\bar{n}}+1}, \theta_u^{{\bar{n}}+1})\\
&+2s_2(\theta_p^{{\bar{n}}+1}, \theta_p^{{\bar{n}}+1})\Big)\\
\leq&{\rho}\Vert \theta_u^{0}\Vert^2+C\int_{t^0}^{t^{N+1}}\Vert\eta_{ht}^u\Vert^2d\bar{s}+C\tau^2\int_{t^0}^{t^{N+1}}\Vert\mathbf{u}_{tt} \Vert^2d\bar{s}.
\end{aligned}
\end{equation}
Thus,
 \begin{equation*}
\begin{aligned}
\Vert e_u^{N+1}\Vert^2&\leq \Vert \eta_u^{N+1}\Vert^2+\Vert \theta_u^{N+1}\Vert^2\\
&\leq \Vert \eta_u^{N+1}\Vert^2+\Vert \theta_u^{0}\Vert^2+C\int_{t^0}^{t^{N+1}}\Vert\eta_{ht}^u\Vert^2d\bar{s}
+C\tau^2\int_{t^0}^{t^{N+1}}\Vert\mathbf{u}_{tt} \Vert^2d\bar{s}.
\end{aligned}
\end{equation*}
Combining \eqref{eq:fully_er_term2} with Lemma \ref{lm:err_stokes_pro} yields the estimate for the velocity in $L^2$ norm.

In order to estimate the velocity in energy norm, we rewrite \eqref{eq:fully_er_term} as follows
 \begin{equation*}
\begin{aligned}
&\rho(\bar{\partial}_{tt}\theta_u^{{\bar{n}}+1}, \chi_0)+a(\bar{\partial}_{t}\theta_u^{{\bar{n}}+1}, \chi_h)-(\bar{\partial}_{t}\theta_p^{{\bar{n}}+1},\nabla_w\cdot\chi_h)\\
=&\rho(\bar{\partial}_{tt}\eta_u^{{\bar{n}}+1}, \chi_0)+\rho(\bar{\partial}_{tt}\mathbf{u}(t^{{\bar{n}}+1})-\bar{\partial}_{t}\mathbf{u}_t(t^{{\bar{n}}+1}), \chi_0),\\
&(\nabla_w\cdot\bar{\partial}_{t}\theta_u^{{\bar{n}}+1},\psi)+s_2(\bar{\partial}_{t}\theta_p^{{\bar{n}}+1}, \psi)=0,
\end{aligned}
\end{equation*}
where $\bar{\partial}_{tt}\theta_u^{{\bar{n}}+1}=\frac{\bar{\partial}_{t}\theta_u^{{\bar{n}}+1}-\bar{\partial}_{t}\theta_u^{{\bar{n}}}}{\tau}$.
Taking $\chi_h=\bar{\partial}_{t}\theta_u^{{\bar{n}}+1}$ and $\psi=\bar{\partial}_{t}\theta_p^{{\bar{n}}+1}$ in the above identity leads to
 \begin{equation*}
\begin{aligned}
&\frac{\rho}{2\tau}(\Vert \bar{\partial}_{t}\theta_u^{{\bar{n}}+1}\Vert^2-\Vert \bar{\partial}_{t}\theta_u^{{\bar{n}}}\Vert^2)+{C_1}\Vert\nabla_w\bar{\partial}_{t}\theta_u^{{\bar{n}}+1}\Vert^2+{C_0\rho}\Vert \bar{\partial}_{t}\theta_u^{{\bar{n}}+1}\Vert^2\\
&+\frac{1}{2}s_1(\bar{\partial}_{t}\theta_u^{{\bar{n}}+1}, \bar{\partial}_{t}\theta_u^{{\bar{n}}+1})+s_2(\bar{\partial}_{t}\theta_p^{{\bar{n}}+1}, \bar{\partial}_{t}\theta_p^{{\bar{n}}+1})\\
\leq&C\Vert\bar{\partial}_{tt}\eta_u^{{\bar{n}}+1}\Vert^2+C\Vert\bar{\partial}_{tt}\mathbf{u}(t^{{\bar{n}}+1})-\bar{\partial}_{t}\mathbf{u}_t(t^{{\bar{n}}+1})\Vert^2\\
\leq&C\frac{1}{\tau}\int_{t^{{\bar{n}}-1}}^{t^{{\bar{n}}+1}}\Vert\eta_{htt}^u\Vert^2d\bar{s}+C\tau\int_{t^{{\bar{n}}-1}}^{t^{{\bar{n}}+1}}\Vert\mathbf{u}_{ttt} \Vert^2d\bar{s}.
\end{aligned}
\end{equation*}
Here, we have used 
$$\Vert\bar{\partial}_{tt}\eta_u^{{\bar{n}}+1}\Vert^2\leq\frac{1}{\tau}\int_{t^{{\bar{n}}-1}}^{t^{{\bar{n}}+1}}\Vert\eta^u_{htt}\Vert^2d\bar{s}$$
 and
$$\Vert\bar{\partial}_{tt}\mathbf{u}(t^{{\bar{n}}+1})-\bar{\partial}_{t}\mathbf{u}_t(t^{{\bar{n}}+1})\Vert^2\leq\tau\int_{t^{{\bar{n}}-1}}^{t^{{\bar{n}}+1}}\Vert\mathbf{u}_{ttt} \Vert^2d\bar{s}.$$
Summing ${\bar{n}}$ from $0$ to $N$ gives rise to
 \begin{equation}\label{eq:fully_er_term3}
\begin{aligned}
&{\rho}(\Vert \bar{\partial}_{t}\theta_u^{N+1}\Vert^2-\Vert \bar{\partial}_{t}\theta_u^{1}\Vert^2)+{\tau}\sum_{{\bar{n}}=0}^N\Big(2{C_1}\Vert\nabla_w\bar{\partial}_{t}\theta_u^{{\bar{n}}+1}\Vert^2+{2C_0\rho}\Vert \bar{\partial}_{t}\theta_u^{{\bar{n}}+1}\Vert^2\\
&+s_1(\bar{\partial}_{t}\theta_u^{{\bar{n}}+1}, \bar{\partial}_{t}\theta_u^{{\bar{n}}+1})+2s_2(\bar{\partial}_{t}\theta_p^{{\bar{n}}+1}, \bar{\partial}_{t}\theta_p^{{\bar{n}}+1})\Big)\\
\leq&C\int_{t^{0}}^{t^{N+1}}\Vert\eta_{htt}^u\Vert^2ds+C\tau^2\int_{t^{0}}^{t^{N+1}}\Vert\mathbf{u}_{ttt} \Vert^2ds.
\end{aligned}
\end{equation}
On the other side, we have by using the Cauchy-Schwarz inequality 
 \begin{equation*}
\begin{aligned}
&\frac{1}{2\tau}\Big(\mu\Vert\nabla_w\theta_u^{{\bar{n}}+1}\Vert^2-\mu\Vert\nabla_w\theta_u^{{\bar{n}}}\Vert^2+s_1(\theta_u^{{\bar{n}}+1}, \theta_u^{{\bar{n}}+1})-s_1(\theta_u^{{\bar{n}}}, \theta_u^{{\bar{n}}}) \\
&+s_2(\theta_p^{{\bar{n}}+1}, \theta_p^{{\bar{n}}+1})-s_2(\theta_p^{{\bar{n}}}, \theta_p^{{\bar{n}}})\Big)\\
\leq&\mu(\nabla_w\bar{\partial}_{t}\theta_u^{{\bar{n}}+1}, \nabla_w\theta_u^{{\bar{n}}+1})+s_1(\bar{\partial}_{t}\theta_u^{{\bar{n}}+1}, \theta_u^{{\bar{n}}+1})+s_2(\bar{\partial}_{t}\theta_p^{{\bar{n}}+1}, \theta_p^{{\bar{n}}+1})\\
\leq&\frac{1}{2}\Big(\mu\Vert\nabla_w\theta_u^{{\bar{n}}+1}\Vert^2+s_1(\theta_u^{{\bar{n}}+1}, \theta_u^{{\bar{n}}+1})+s_2(\theta_p^{{\bar{n}}+1}, \theta_p^{{\bar{n}}+1})\Big)\\
&+\frac{1}{2}\Big(\mu\Vert\nabla_w\bar{\partial}_{t}\theta_u^{{\bar{n}}+1}\Vert^2+s_1(\bar{\partial}_{t}\theta_u^{{\bar{n}}+1}, \bar{\partial}_{t}\theta_u^{{\bar{n}}+1})+s_2(\bar{\partial}_{t}\theta_p^{{\bar{n}}+1}, \bar{\partial}_{t}\theta_p^{{\bar{n}}+1})\Big).
\end{aligned}
\end{equation*}
Summing the above inequality from $0$ to $N$ leads to 
 \begin{equation*}
\begin{aligned}
&\mu\Vert\nabla_w\theta_u^{N+1}\Vert^2-\mu\Vert\nabla_w\theta_u^{0}\Vert^2+s_1(\theta_u^{N+1}, \theta_u^{N+1})-s_1(\theta_u^{0}, \theta_u^{0}) \\
&+s_2(\theta_p^{N+1}, \theta_p^{N+1})-s_2(\theta_p^{0}, \theta_p^{0})\\
\leq&\tau\sum_{{\bar{n}}=0}^N\Big(\mu\Vert\nabla_w\theta_u^{{\bar{n}}+1}\Vert^2+s_1(\theta_u^{{\bar{n}}+1}, \theta_u^{{\bar{n}}+1})+s_2(\theta_p^{{\bar{n}}+1}, \theta_p^{{\bar{n}}+1})\Big)\\
&+\tau\sum_{{\bar{n}}=0}^N\Big(\mu\Vert\nabla_w\bar{\partial}_{t}\theta_u^{{\bar{n}}+1}\Vert^2+s_1(\bar{\partial}_{t}\theta_u^{{\bar{n}}+1}, \bar{\partial}_{t}\theta_u^{{\bar{n}}+1})+s_2(\bar{\partial}_{t}\theta_p^{{\bar{n}}+1}, \bar{\partial}_{t}\theta_p^{{\bar{n}}+1})\Big).
\end{aligned}
\end{equation*}
Together with  \eqref{eq:fully_er_term2} and \eqref{eq:fully_er_term3}, we obtain
 \begin{equation}\label{eq:fully_er_term4}
\begin{aligned}
&\mu\Vert\nabla_w\theta_u^{N+1}\Vert^2+s_1(\theta_u^{N+1}, \theta_u^{N+1}) +s_2(\theta_p^{N+1}, \theta_p^{N+1})\\
\leq&\mu\Vert\nabla_w\theta_u^{0}\Vert^2+s_1(\theta_u^{0}, \theta_u^{0})+s_2(\theta_p^{0}, \theta_p^{0})+\rho\Vert \bar{\partial}_{t}\theta_u^{1}\Vert^2+{\rho}\Vert \theta_u^{0}\Vert^2\\
&+C\int_{t^{0}}^{t^{N+1}}\Vert\eta_{ht}^u\Vert^2+\Vert\eta_{htt}^u\Vert^2d\bar{s}
+C\tau^2\int_{t^{0}}^{t^{N+1}}\Vert\mathbf{u}_{tt} \Vert^2+\Vert\mathbf{u}_{ttt} \Vert^2d\bar{s}.
\end{aligned}
\end{equation}
Note that $\3bar e_u^{N+1}\3bar^2\leq\3bar \eta_u^{N+1}\3bar^2+\3bar \theta_u^{N+1}\3bar^2$. Thus, by 
combining the above inequality and Lemma \ref{lm:err_stokes_pro} we obtain the desired estimate for the velocity in energy norm.

To estimate the pressure in $L^2$ norm , we first use Lemma \ref{lem:inf-sup} and \eqref{eq:fully_er_term} to obtain
 \begin{equation*}
\begin{aligned}
&\Vert \theta_p^{N+1}\Vert^2\leq 2b(\mathbf{v}_h, \theta_p^{N+1})+Ch^{1-\alpha}s_2(\theta_p^{N+1},\theta_p^{N+1})\\
=&\rho(\bar{\partial}_t\theta_u^{N+1}, \mathbf{v}_0)+a(\theta_u^{N+1}, \mathbf{v}_h)-\rho(\bar{\partial}_t\eta_u^{N+1}, \mathbf{v}_0)-\rho(\bar{\partial}_t\mathbf{u}(t^{N+1})-\mathbf{u}_t(t^{N+1}),\mathbf{v}_0)\\
&+Ch^{1-\alpha}s_2(\theta_p^{N+1},\theta_p^{N+1}).
\end{aligned}
\end{equation*}
Next from the Cauchy-Schwarz inequality and \eqref{eq:fully_er_term2}, \eqref{eq:fully_er_term3}, \eqref{eq:fully_er_term4}, we have
 \begin{equation*}
\begin{aligned}
\Vert \theta_p^{N+1}\Vert^2\leq& C(1+h^{1+\gamma})\Big(\Vert\bar{\partial}_t\theta_u^{N+1}\Vert^2+\3bar\theta_u^{N+1}\3bar^2+\Vert\theta_u^{N+1}\Vert^2\\
&+\frac{1}{\tau'}\int _{t^{N}}^{t^{N+1}}\Vert\eta_{ht}^u\Vert^2d\bar{s}+\tau'\int _{t^{N}}^{t^{N+1}}\Vert\mathbf{u}_{tt}\Vert^2d\bar{s}\Big)
+Ch^{1-\alpha}s_2(\theta_p^{N+1},\theta_p^{N+1})\\
\leq& C(1+h^{1+\gamma}+h^{1-\alpha})\Big( \mu\Vert\nabla_w\theta_u^{0}\Vert^2+s_1(\theta_u^{0}, \theta_u^{0})+s_2(\theta_p^{0}, \theta_p^{0})+\rho\Vert \bar{\partial}_{t}\theta_u^{1}\Vert^2\\
&+{\rho}\Vert \theta_u^{0}\Vert^2+\int_{t^{0}}^{t^{N+1}}\Vert\eta_{ht}^u\Vert^2+\Vert\eta_{htt}^u\Vert^2d\bar{s}
+\tau^2\int_{t^{0}}^{t^{N+1}}\Vert\mathbf{u}_{tt} \Vert^2+\Vert\mathbf{u}_{ttt} \Vert^2d\bar{s}\\
&+\frac{1}{\tau'}\int _{t^{N}}^{t^{N+1}}\Vert\eta_{ht}^u\Vert^2d\bar{s}+\tau'\int _{t^{N}}^{t^{N+1}}\Vert\mathbf{u}_{tt}\Vert^2ds\Big),
\end{aligned}
\end{equation*}
where $\tau'=t^{N+1}-t^{N}$. This completes the desired estimate for the pressure approximation by using Lemma \ref{lm:err_stokes_pro} and the inequality $\Vert e_{p}^{N+1}\Vert^2\leq\Vert \eta_{p}^{N+1}\Vert^2+\Vert \theta_{p}^{N+1}\Vert^2$.
\end{proof}

\begin{remark}
For the case of $n\leq j$, the {\em{g}WG} scheme \eqref{sch:evo_iWG_fully} for the evolutionary equation can be formulated without including the stabilizer $s_2$; i.e. $\sigma=0$.  The corresponding error estimate can be obtained without any difficulty by following the above procedure.
\end{remark}

\section{Numerical Experiments}\label{sec:5}
In this section, we will present two computational examples to verify the convergence theory established in previous sections. One of the two examples is a steady-state Oseen equation and the other is an evolutionary one. 
We employ the elements $([P_1]^2,[P_0]^2,[P_1]^{2\times 2},P_0,P_0)$ and $([P_2]^2,[P_1]^2,[P_1]^{2\times2},P_1,P_1)$ in the numerical implementation. The parameters are set as follows: $\mu=1$, $\rho=1$ and $\zeta=1$. The numerical experiments are based on a regular family $\mathcal{T}_h$ of triangulations for the domain.

\begin{example}\label{ex:01}
Consider the steady-state Oseen equation in domain $\Omega=(0,1)^2$. The exact solution is given by $\mathbf{u}=[x^2y;-x y^2]$ and $p=(2x-1)(2y-1)$. The convective vector is set as $\beta=[-x+\sin{x}\sin{y};\cos{x} \cos{y}]$ so that $\nabla\cdot\beta=-1$. 
\end{example}

The results are shown in Tables \ref{TT01} and \ref{TT02}. For the $([P_1]^2,[P_0]^2,[P_1]^{2\times2},P_0,P_0)$  element, we observe a convergence of order $O(h)$ for the velocity in energy norm 
and the pressure in $L^2$ norm. The convergence is shown to be of order $O(h^2)$ for the velocity in $L^2$ norm in Table \ref{TT01}. For the $([P_2]^2,[P_1]^2,[P_1]^{2\times2},P_1,P_1)$ element, we obtained the expected order of convergence for both the velocity and the pressure in various norms, as demonstrated in Table \ref{TT02}. The numerical results are in great consistency with the convergence theory developed in previous sections.

%For the scheme with skew-symmetric term, we show the numerical results in Table.

\begin{table}[!th]
\renewcommand{\captionfont}{\footnotesize}
  \centering
%   \resizebox{\textwidth}{!}{
  \small
  \begin{tabular}{ccccccc}
  \hline\noalign{\smallskip}
  \multicolumn{1}{c}{h}&\multicolumn{2}{c}{$\3bare_h^\mathbf{u}\3bar$}
    &\multicolumn{2}{c}{$||e^\mathbf{u}_h||$}
      &\multicolumn{2}{c}{$||e_h^p||$}\\
\cline{2-3}\cline{4-5}\cline{6-7}\noalign{\smallskip}
 &error  &order &error  &order &error  &order\\
  \hline
 1/8  &9.5305e-02  &~~~  &4.6205e-03   &~~~   &1.3175e-01  &~~~    \\
 1/16  &4.9781e-02 &0.93 &1.3090e-03   &1.81   &6.4353e-02  &1.03   \\
 1/32  &2.5309e-02 &0.97 &3.4292e-04    &1.93  &3.0501e-02   &1.07 \\
 1/64  &1.2727e-02 &0.99 &8.7119e-05    &1.97 &1.4586e-02   &1.06  \\
\hline
  \end{tabular}
%  }
 \caption{Example \ref{ex:01}: Convergence with elements $([P_1]^2,[P_0]^2,[P_1]^{2\times2},P_0,P_0)$: $\gamma=1$ and $\sigma=0$, i.e.  without the stabilizer $s_2$ }
 \label{TT01}
\end{table}

\begin{table}[!th]
\renewcommand{\captionfont}{\footnotesize}
  \centering
%   \resizebox{\textwidth}{!}{
  \small
  \begin{tabular}{ccccccc}
  \hline\noalign{\smallskip}
  \multicolumn{1}{c}{h}&\multicolumn{2}{c}{$\3bare_h^\mathbf{u}\3bar$}
    &\multicolumn{2}{c}{$||e^\mathbf{u}_h||$}
      &\multicolumn{2}{c}{$||e_h^p||$}\\
\cline{2-3}\cline{4-5}\cline{6-7}\noalign{\smallskip}
 &error  &order &error  &order &error  &order\\
  \hline
 1/8  &8.4499e-03  &~~~  &3.3478e-04     &~~~    &1.1235e-02    &~~~    \\
 1/16  &2.1246e-03 &1.99 &4.1883e-05      &2.99    &2.8109e-03    &1.99   \\
 1/32  &5.3256e-04 &1.99 &5.2382e-06    &2.99  &7.0305e-04      &1.99   \\
 1/64  &1.3331e-04 &1.99 &6.5497e-07    &2.99  &1.7581e-04      &1.99   \\
\hline
  \end{tabular}
%  }
 \caption{Example \ref{ex:01}: Convergence with elements $([P_2]^2,[P_1]^2,[P_1]^{2\times2},P_1,P_1)$:  $\gamma=1$ and $\sigma=0$, i.e.  without the stabilizer $s_2$ }
 \label{TT02}
\end{table}

\begin{example}\label{ex:02}
Consider the evolutionary Oseen equation with domain $(0,1)^2\times(0,\bar{T}]$. The exact solution is chosen as $\mathbf{u}=e^{-t}[x^2y;-x y^2]$ and $p=\sin{t}(2x-1)(2y-1)$. The convective vector is set as $\beta=[-x+\sin{x}\sin{y};\cos{x} \cos{y}]$ and $\bar{T}=1$.
\end{example}

For the evolutionary case, the convergence depends on the mesh size $h$ and the time step $\tau$.
Tables \ref{TT03} and \ref{TT04} illustrate the order of convergence in terms of the mesh size $h$. It can be seen that the expected convergence order of $O(h)$ is illustrated for the velocity in energy norm 
and the pressure in $L^2$ norm. For the velocity in $L^2$ norm, Table \ref{TT03} shows a clear convergence of  $O(h^2)$.  For the $([P_2]^2,[P_1]^2,[P_1]^{2\times2},P_1,P_1)$ element, Table \ref{TT04} shows a convergence order of $O(h^2)$ for the velocity in energy norm and the pressure in $L^2$ norm. The optimal order of convergence $O(h^3)$ was clearly seen in Table \ref{TT04} for the velocity in $L^2$ norm.

Table \ref{TT05} demonstrates the convergence of the numerical scheme in time discretization. As the backward Euler scheme was implemented in the fully-discrete scheme,  the table shows a convergence of order $O(\tau)$ in time with expectation. All the results are consistent with the theory developed in this paper.

\begin{table}[!th]
\renewcommand{\captionfont}{\footnotesize}
  \centering
%   \resizebox{\textwidth}{!}{
  \small
  \begin{tabular}{ccccccc}
  \hline\noalign{\smallskip}
  \multicolumn{1}{c}{h}&\multicolumn{2}{c}{$\3bare_h^\mathbf{u}\3bar$}
    &\multicolumn{2}{c}{$||e^\mathbf{u}_h||$}
      &\multicolumn{2}{c}{$||e_h^p||$}\\
\cline{2-3}\cline{4-5}\cline{6-7}\noalign{\smallskip}
 &error  &order &error  &order &error  &order\\
  \hline
 1/8  &5.6155e-02 &~~~~ &2.9496e-03   &~~~~   &1.0792e-01  &~~~~   \\
 1/16  &3.0217e-02 &0.89 &8.7776e-04    &1.74  &5.3055e-02   &1.02\\
 1/32  &1.5530e-02 &0.96 &2.3377e-04    &1.90 &2.5267e-02    &1.07  \\
 1/64 &7.8384e-03  &0.98 &5.9719e-05   &1.96  &1.2118e-02    &1.06    \\
\hline
  \end{tabular}
%  }
 \caption{Example \ref{ex:02}: Convergence with elements $([P_1]^2,[P_0]^2,[P_1]^{2\times2},P_0,P_0)$: $\tau=h^2$, $\gamma=1$ and $\sigma=0$, i.e.  without the stabilizer $s_2$ }
 \label{TT03}
\end{table}

\begin{table}[!th]
\renewcommand{\captionfont}{\footnotesize}
  \centering
%   \resizebox{\textwidth}{!}{
  \small
  \begin{tabular}{ccccccc}
  \hline\noalign{\smallskip}
  \multicolumn{1}{c}{h}&\multicolumn{2}{c}{$\3bare_h^\mathbf{u}\3bar$}
    &\multicolumn{2}{c}{$||e^\mathbf{u}_h||$}
      &\multicolumn{2}{c}{$||e_h^p||$}\\
\cline{2-3}\cline{4-5}\cline{6-7}\noalign{\smallskip}
 &error  &order &error  &order &error  &order\\
  \hline
 1/4  &2.6240e-02  &~~~  &2.0985e-03     &~~~    &3.7737e-02    &~~~    \\
 1/8  &6.6697e-03 &1.97 &2.6389e-04      &2.99    &9.4291e-03    &2.00   \\
 1/16  &1.6786e-03 &1.99 &3.3072e-05    &2.99  &2.3582e-03      &1.99   \\
 1/32  &4.2090e-04 &1.99 &4.1448e-06    &2.99  &5.8976e-04      &1.99   \\
\hline
  \end{tabular}
%  }
 \caption{Example \ref{ex:02}: Convergence with elements $([P_2]^2,[P_1]^2,[P_1]^{2\times2},P_1,P_1)$: $\tau=h^2$, $\gamma=1$ and $\sigma=0$, i.e.  without the stabilizer $s_2$ }
 \label{TT04}
\end{table}

\begin{table}[!th]
\renewcommand{\captionfont}{\footnotesize}
  \centering
%   \resizebox{\textwidth}{!}{
  \small
  \begin{tabular}{ccccccc}
  \hline\noalign{\smallskip}
  \multicolumn{1}{c}{$\tau$}&\multicolumn{2}{c}{$\3bare_h^\mathbf{u}\3bar$}
    &\multicolumn{2}{c}{$||e^\mathbf{u}_h||$}
      &\multicolumn{2}{c}{$||e_h^p||$}\\
\cline{2-3}\cline{4-5}\cline{6-7}\noalign{\smallskip}
 &error  &order &error  &order &error  &order\\
  \hline
 1/4  &5.9875e-04  &~~~  &7.6289e-05   &~~~   &3.2232e-03  &~~~    \\
 1/8  &2.8739e-04 &1.05 &3.6495e-05   &1.06   &1.5441e-03  &1.06   \\
 1/16  &1.4248e-04 &1.01 &1.7854e-05    &1.03  &7.5657e-04   &1.02\\
 1/32  &7.4106e-05 &0.94 &8.8315e-06    &1.01 &3.7568e-04   &1.00  \\
\hline
  \end{tabular}
%  }
 \caption{Example \ref{ex:02}: Convergence with elements $([P_2]^2,[P_1]^2,[P_1]^{2\times2},P_1,P_1)$: $h=1/128$, $\gamma=1$ and $\sigma=0$, i.e.  without the stabilizer $s_2$ }
 \label{TT05}
\end{table}

\section{Conclusions}
In this paper, a {g}WG method was introduced and analyzed for the evolutionary Oseen equation by using generalized weak gradient for velocity. This new numerical scheme was designed without using the usual skew-symmetric formulation for the convective term. Error estimates of optimal order were established for the new scheme with arbitrary combination of polynomials. The backward Euler discretization was employed in the fully discrete scheme. Numerical experiments were conducted to validate the theory developed in this paper.

%\newpage
  
\end{document}